\theoremstyle{plain}
\newtheorem{theorem}{Theorem}[section]
\newtheorem*{theorem*}{Theorem}
\newtheorem{lemma}[theorem]{Lemma}
\newtheorem{proposition}[theorem]{Proposition}
\newtheorem{corollary}[theorem]{Corollary}
\theoremstyle{definition}
\theoremstyle{remark}
\newtheorem{remark}[theorem]{Remark}
\numberwithin{equation}{section}
\numberwithin{figure}{section}
\newcommand{\bd}{\begin{description}}   
\newcommand{\ed}{\end{description}} 
\newcommand{\ba}{\begin{array}}      \newcommand{\ea}{\end{array}} 
\newcommand{\bc}{\begin{center}}     \newcommand{\ec}{\end{center}} 
\newcommand{\be}{\begin{enumerate}}  \newcommand{\ee}{\end{enumerate}} 
\newcommand{\beq}{\begin{eqnarray}}  \newcommand{\eeq}{\end{eqnarray}} 
\newcommand{\beQ}{\begin{eqnarray*}} \newcommand{\eeQ}{\end{eqnarray*}} 
\newcommand{\bi}{\begin{itemize}}    \newcommand{\ei}{\end{itemize}}
\newcommand\op{\mathrm{op}}
\def\co{\colon\thinspace}
\def\id{\mathop{\mathrm{id}}\nolimits}
\begin{document} 
\title{The universal quantum invariant and colored ideal triangulations} 
\author[S. Suzuki]{Sakie Suzuki} 
\address{Department of Mathematical and Computing Science, School of Computing, Tokyo Institute of Technology, 
2 Chome-12-1 Ookayama, Meguro, Tokyo 152-8552, Japan.}
         \email{sakie@c.titech.ac.jp }
\date{May 1, 2018}
\maketitle

\begin{abstract} 
The Drinfeld double of a finite dimensional Hopf algebra is a quasi-triangular Hopf algebra with the canonical element as the universal $R$-matrix, and one can obtain a ribbon Hopf algebra by adding the ribbon element. The universal quantum  invariant of framed links  is constructed  using a ribbon Hopf algebra. In that construction, a copy of  the universal $R$-matrix is attached to each crossing, and invariance under the Reidemeister III move is shown by the quantum Yang-Baxter equation of the universal $R$-matrix. 
On the other hand,  the Heisenberg double of a finite dimensional Hopf algebra has the canonical element (the \textit{$S$-tensor}) satisfying the pentagon relation. In this paper we reconstruct the universal quantum invariant using the Heisenberg double, and  extend it to an invariant of equivalence classes of  \textit{colored ideal triangulations} of $3$-manifolds up to \textit{colored moves}.  In this construction, a copy of the $S$-tensor is attached to each tetrahedron, and invariance under the \textit{colored Pachner $(2,3)$ moves} is shown by the pentagon relation of the $S$-tensor.
\end{abstract} 

\tableofcontents

\section{Introduction}

The universal quantum invariant \cite{Law89, Law90,O} associated to a ribbon Hopf algebra is an invariant of framed tangles in a cube which has the universal property over Reshetikhin-Turaev invariants \cite{RT90}.   The relationship between the universal quantum invariant and $3$-dimensional, global, topological properties of tangles is not well understood, mainly because of the $2$-dimensional definition using link diagrams.  
In this paper, we give a reconstruction  of the universal quantum invariant using \textit{colored ideal triangulations} of tangle complements, 
and give an extension of  the universal quantum invariant  to an invariant of equivalence classes of colored ideal triangulations of $3$-manifolds up to \textit{colored moves}.
We expect that our framework will become a new method to study the quantum invariants in a $3$-dimensional way.

\subsection{Reconstruction and extension of the universal quantum invariant}

In the theory of quantum groups there are two doubles of a finite dimensional Hopf algebra $A$. One is the \textit{Drinfeld  double} $D(A)$ and the other is the \textit{Heisenberg double} $H(A)$. They are both isomorphic to $A^*\otimes A$ as vector spaces.

The Drinfeld double  $D(A)$ is a quasi-triangular Hopf algebra with a canonical element $R\in D(A)^{\otimes 2}$ as the universal $R$-matrix, which satisfies the quantum Yang-Baxter equation
\begin{align*}
R_{12}R_{13}R_{23}=R_{23}R_{13}R_{12},
\end{align*}
see e.g. \cite{Dri87, Maj98, Maj99}. One can obtain a ribbon Hopf algebra $D(A)^{\theta}$ by adding the ribbon element $\theta$. In what follows we assume that the universal quantum invariant is associated to $D(A)^{\theta}$ for a finite dimensional Hopf algebra $A$.

The Heisenberg double $H(A)$  is a generalization of the Heisenberg algebras \cite{Sem92,Lu94,Kap98}.
Baaj-Skandalis \cite{BS93} and Kashaev \cite{Kas97} showed that a canonical element $S\in H(A)^{\otimes 2}$, which we call the \textit{$S$-tensor}, satisfies the pentagon relation 
\begin{align*}
S_{12}S_{13}S_{23}=S_{23}S_{12}.
\end{align*}
Kashaev \cite{Kas97} also constructed an algebra embedding $\phi\co D(A) \to H(A)\otimes H(A)^{\op}$ such that the image of the universal $R$-matrix is a product of four variants of the $S$-tensor:
\begin{align}\label{rs}
\phi^{\otimes 2}(R)=S_{14}''S_{13}\tilde S_{24} S_{23}' \quad \in \left(H(A)\otimes H(A)^{\op}\right)^{\otimes 2},
\end{align}
where $S', S''$ and $\tilde S$ are the images of $S$ by maps constructed from the antipode, see Theorem \ref{RvsSth}.

The situation (\ref{rs}) reminds us the situation of an  \textit{octahedral triangulation} \cite{CKK14, Yok11, We05}  of the complement of a link in $S^3\setminus \{\pm \infty\}$,  where an octahedron consisting of four tetrahedra is associated to each crossing of a link diagram. \footnote{Throughout this paper we consider only topological ideal triangulations and we do not consider geometric structures on them.}
Actually, corresponding to the formula (\ref{rs}), Kashaev \cite{Kas95} constructed the $R$-matrix consisting of four \textit{quantum dilogarithms} defined in \cite{FK}, and gave a link invariant.  Baseilhac and Benedetti \cite{BB4} also constructed the $R$-matrix consisting of four quantum dilogarithms, each of which is associated to tetrahedron in a singular triangulation of a $3$-manifold, and they recovered Kashaev's $R$-matrix. 
Hikami and Inoue  \cite{HI14, HI15} constructed the $R$-matrix consisting of four \textit{mutations} in a cluster algebra.
Here a mutation is associated to a flip of triangulated surface, where a flip is obtained by attaching a tetrahedron to the surface. They also recovered Kashaev's $R$-matrix  up to a gauge-transformation. 

In this context, it is natural to ask if we can reconstruct the universal quantum invariant of a tangle using an octahedral triangulation of its complement,  where a copy of the $S$-tensor is associated to each tetrahedron in the octahedral triangulation.

The answer is yes, and in this paper we give such a reconstruction. 
Here, we would like to stress that, we can construct the universal quantum invariant using the $S$-tensor by simply rewriting the universal $R$-matrix by (variants of) the $S$-tensor using $\phi^{\otimes 2}$. However, an important result is that we give a way to relate a copy of the $S$-tensor to an ideal tetrahedron in an octahedral triangulation, and a way to read these copies of the $S$-tensor to obtain the universal quantum invariant.
The framework of the above reconstruction enables us to extend the universal quantum invariant to an invariant for \textit{colored singular triangulations}
 of $3$-manifolds up to \textit{colored moves}. 
\footnote{The universal quantum invariant of a tangle is an isotopy invariant, while the extended universal quantum invariant of the complement of the tangle is \textit{not} a topological invariant. That is because there is a \textit{canonical} coloring for the complement of a tangle and the universal quantum invariant of a tangle is equal to the extended universal quantum invariant of its complement with the canonical coloring. }

\subsection{Universal quantum invariant as a state sum invariant with weights in a non-commutative ring}
Let us explain the nature of the coloring on a singular triangulation from a viewpoint of state sum constructions.

One can obtain a state sum invariant of tangles and $3$-manifolds by associating a \textit{$6$j-symbol} to each tetrahedron in a triangulation of a $3$-manifold, where the values of the $6$j-symbol on colors on the edges of a tetrahedron give a weight of the state sum \cite{TV, Oc}.

In the context of hyperbolic geometry, there are several attempts to construct a state sum invariant of hyperbolic links and hyperbolic $3$-manifolds, such that, to each tetrahedron one associates Faddeev and Kashaev's quantum dilogarithm, and the values of them on the cross ratio moduli of hyperbolic ideal triangulation give weights of the state sum.  The first relation between quantum state sums and hyperbolic geometry seems to be  \cite{Kas94} by Kashaev. For an odd integer $N>1$, he proposed a state sum for triangulations of pairs $(M,L)$ of a closed oriented $3$-manifold $M$ and a link $L$ in $M$, using the cyclic $6$j-symbol $R(p,q,r)$ of the Borel subalgebra of $U_q(sl_2)$.
He also showed  that  $R(p,q,r)$ is obtained from certain operators $S$ and $\Psi_{p,q,r}$  on $\mathbb{C}^N\otimes \mathbb{C}^N$, where  $S$ satisfies a certain pentagon relation and  $\Psi_{p,q,r}$ satisfies a version of the quantum dilogarithm identity. A semi-classical
limit of this identity gives
Rogers's identity for
Euler's dilogarithm, and this fact seems to lead Kashaev to his famous conjecture
about the relationship between his invariant and the hyperbolic volumes of link
complements \cite{Kas97'}.

Murakami and Murakami \cite{MM} showed that Kashaev's $R$-matrix is conjugate (up to scalar multiplication) to that of the \textit{colored Jones polynomial} $J_N$ with $q=\exp \frac{2\pi i}{N}$ and an $N$-dimensional irreducible representation of $U_q(sl_2)$. This result
also showed that, in the case of links in the three-sphere, the Kashaev state sums lead to well-defined invariants. 
Murakami-Murakami's construction could be seen as a state sum invariant with a weight associated to a crossing, consisting of four quantum dilogarithms. 

Baseilhac and Benedetti  \cite{BB1,BB2, BB3, BB4} constructed \textit{quantum hyperbolic invariants} 
(QHI)  for triples $(M, L, r)$, where $M$ is a compact oriented $3$-manifold, $L$ is a non-empty link  in $M$, and $r$ is a flat principal bundle over $M$ with structure group $PSL(2,\mathbb{C})$.
These invariants are obtained by adapting and generalizing the constructions of Kashaev, and in the case where $M$ is the three-sphere and $r$ is the trivial flat bundle,  they recovered the Kashaev invariants. 
In \cite{BB5}, they reorganized QHI as invariants for tuples $(M, L, r, \kappa)$, where $\kappa$ is a family of cohomological classes called weights. In this version, the QHI are defined by state sums where tensors called matrix dilogarithms (related to the cyclic $6$j-symbols $R(p,q,r)$) 
 are associated to tetrahedra in a singular triangulation. 
The arguments of the matrix dilogarithms are certain special systems of $N$th
roots of hyperbolic shape parameters on the tetrahedra, encoding the flat bundle $r$ and the weights $\kappa$.

On the other hand, the universal quantum invariant could be seen as a state sum invariant with weights being tensors of a ribbon Hopf algebra; a weight is associated to each fundamental tangle (see Figure \ref{fig:fundamental}), especially a copy of the universal $R$-matrix is associated to each crossing, and one takes products of the weights in the order following the orientations of strands of a tangle (see Section \ref{univinv} for the precise definition). We would like to apply this framework to a state sum construction using  triangulations, i.e., our motto (framework)  is:

\begin{center}
\textit{Using an element $S$ satisfying a pentagon relation in an (non-commutative) algebra, 
construct a state sum invariant of $3$-manifolds by associating  a copy of $S$  to each tetrahedron of a (singular) triangulation.} 
\end{center}
The state sum invariants using $6$j-symbols (resp. quantum dilogarithms) could be treated in this framework as functions, rather than as its values in $\mathbb{C}$,  on colors on edges of tetrahedra (resp. cross ratio moduli of ideal tetrahedra \cite{BB5})
and we expect to obtain those invariants from the universal quantum invariant  naturally keeping this framework.

In the above framework one does not need to fix colors on the edges of a tetrahedron or cross ratio modulus of an ideal tetrahedron, and for the proof of invariance of state sums, instead of the pentagon identity of  $6$j-symbols or of quantum dilogarithms, one would work with an algebraic pentagon relation. 
Moreover, we expect that such an invariant involves combinatorial information of a triangulation in its non-commutative algebra structure, including the consistency and the completeness conditions of ideal triangulations when we fix cross ratio moduli. 

When we use  a (singular) triangulation, we do not have a canonical order on the set of weights on tetrahedra in the triangulation. Thus we need to fix an order, then we naturally come to a notion of the colored singular triangulation 
\footnote{The notion of  colored singular triangulations can be interpreted by the notion of branchings \cite{BS, BP, BB5, BB6}, see Remark \ref{branch}. In this paper we keep the former one since it is defined combinatorially and fit to our purpose. When one would like to see geometric properties of state sums, then the latter one would make more sense.}
: each tetrahedron is sticked by two strands and strands are connected  globally in the triangulation. Then a copy of the $S$-tensor is associated to the two strands of each tetrahedron and we can read the copies of the $S$-tensor in the order following the orientations of the  strands.
Corresponding to the \textit{Pachner $(2,3)$ move} and the \textit{$(0,2)$ move} of singular triangulations, 
we define  \textit{colored Pachner $(2,3)$ moves} and \textit{colored  $(0,2)$ moves} of colored singular triangulations.
The extension of the universal quantum invariant is an invariant of colored singular triangulations up to certain \textit{colored moves}.
In this paper these strands first arise from a tangle diagram, and
then we consider strands more generally in singular triangulations of  topological spaces. 

\if0
In  \cite{Kas97}, Kashaev also showed that  the $S$-tensor of Heisenberg double of the Borel subalgebra $U_q(b_+)$ of $U_{q}(sl_2)$ for $q=\exp (-\hbar)$ with positive real parameter $\hbar$ holds the quantum dilogarithm identity.  Thus it is natural to expect that Kashaev's invariants are obtained from the universal quantum invariant associated to $U_q(b_+)$ directly making correspondence between the $S$-tensor and his $6$j-symbol. We will study this direction further in \cite{pre1}, including a relation between the Volume conjecture of links.
\fi

\subsection{Organization of this paper}

Section \ref{univinv0} is devoted to the definition of the universal quantum invariant associated to a ribbon Hopf algebra.
In Section \ref{dh}, we recall the Drinfeld double $D(A)$ and the Heisenberg double $H(A)$ of a finite dimensional Hopf algebra $A$, where the universal $R$-matrix in  $D(A)^{\otimes 2}$ and the $S$-tensor in $H(A)^{\otimes 2}$ satisfy the quantum Yang-Baxter equation and the pentagon equation, respectively. We also recall from \cite{Kas97} how these  elements are related via an embedding of $D(A)$ into $H(A) \otimes H(A)^{\op}$.
In Section \ref{j''},  we  give a reconstruction of the universal quantum invariant using the Heisenberg double.
In Section \ref{exte} we define colored diagrams and extend the universal quantum invariant to an invariant of colored diagrams up to colored moves.
Section \ref{cit} and Section \ref{gotc}  are devoted to $3$-dimensional descriptions of the reconstruction and the extension of the universal quantum invariant.
In Section \ref{cit} we define colored singular  triangulations of topological spaces. The universal quantum invariant can be considered as an invariant of the colored singular  triangulations.
In Section \ref{gotc} we define colored ideal triangulations of tangle complements
\footnote{For links, this construction corresponds to the branched triangulations defined in \cite[Section 2.3]{BB4} without walls. }
 arising from octahedral triangulations, which have been studied in e.g., \cite{CKK14,Yok11} in the context of the hyperbolic geometry.

\subsection{Acknowledgments}
This work was partially supported by JSPS KAKENHI Grant Number 15K17539.
The author is deeply grateful to Kazuo Habiro and Tomotada Ohtsuki
for helpful advice and encouragement. 
She would like also to thank St\'ephane Baseilhac, Riccardo Benedetti, Naoya Enomoto, Stavros Garoufalidis, Rei Inoue, Rinat Kashaev, Akishi Kato, Seonhwa Kim, Thang Le and  Yuji Terashima for their helpful discussions  and   comments.

\section{Universal quantum invariant}\label{univinv0}

In this paper, a \textit{tangle} means a proper embedding in a cube $[0,1]^3$ of a compact, oriented $1$-manifold, whose
boundary points are on the two parallel lines $[0,1] \times \{0,1\}\times \{1/2\}$.
A \textit{tangle diagram} is a diagram of a tangle obtained from the projection $p\co (x,y,z) \mapsto (x,y,0)$ to the $(x,y)$-plane, see Figure \ref{tangle}.
A \emph{framed} tangle is a tangle equipped with a trivialization of its normal tangent bundle, which is presented in a diagram by the blackboard framing.

\begin{figure}
\includegraphics[width=10cm,clip]{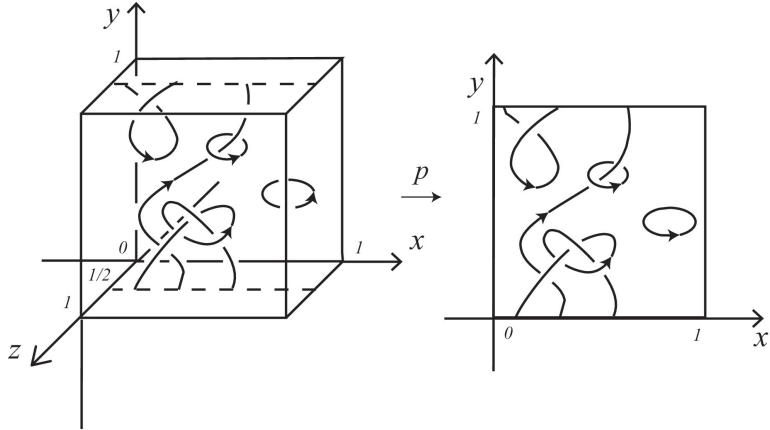}
\caption{A tangle and its diagram}\label{tangle}
\end{figure}
\subsection{Ribbon Hopf algebras}

Let $(A, \eta_A, m_A, \varepsilon_A, \Delta_A, \gamma_A)$ be a finite dimensional Hopf algebra over a field $k$, with $k$-linear maps
\begin{align*}
\eta_A&\co k \to A,
\\
\varepsilon_A&\co A\to k,
\\
m_A&\co A\otimes A \to A,
\\
\Delta_A&\co A \to A\otimes A,
\\
\gamma_A&\co  A \to A,
\end{align*}
which are called \textit{unit},  \textit{counit},  \textit{multiplication},  \textit{comultiplication},  and \textit{antipode}, respectively. 
For simplicity we will omit the subscript $A$ of each map above when there is no confusion.

For distinct integers $1\leq j_1,\ldots, j_m \leq l$  and $x=\sum x_1\otimes \cdots \otimes  x_m\in A^{\otimes m}$, 
we use the notation
\begin{align}\label{subs}
x^{(l)}_{j_1\ldots j_m}=\sum (x_1)_{j_1}\cdots (x_m)_{j_m} \in A^{\otimes l}, 
\end{align}
 where $(x_i)_{j_i}$ represents the element in $A^{\otimes l}$ obtained by placing $x_i$ on the $j_i$th tensorand, i.e.,
$$(x_i)_{j_i}=1\otimes \cdots \otimes x_i
\otimes \cdots \otimes 1,
$$
where $x_i$ is at the ${j_i}$th position.
For example, for $x=\sum x_1\otimes x_2 \otimes x_3,$  we have $x^{(3)}_{312}=\sum x_2 \otimes x_3 \otimes x_1$.
Abusing the notation, we will omit the superscript of $x^{(l)}_{j_1\ldots j_m}$ as $x_{j_1\ldots j_m}$.

For $k$-modules $V, W$, we define the symmetry map
\begin{align}\label{symmetry}
\tau_{V,W}\co  V\otimes W \to W \otimes V, \quad a\otimes b \mapsto b\otimes a.
\end{align}

A quasi-triangular Hopf algebra $(A, \eta, m, \varepsilon, \Delta, \gamma, R)$ is a Hopf algebra $(A, \eta, m, \varepsilon, \Delta, \gamma)$
with an invertible element  $R\in A ^{\otimes 2}$, called \textit{the universal $R$-matrix},  such that

\begin{align*}
&\Delta^{\op}(x) =R \Delta (x) R^{-1} \quad \text{for } x\in A,
\\
&(\Delta\otimes 1)(R)=R_{13}R_{23}, \quad 
(1\otimes \Delta)(R)=R_{13}R_{12},
\end{align*}
where $\Delta^{\op}=\tau_{A,A} \circ \Delta$.

A ribbon Hopf algebra $(A, \eta, m, \varepsilon, \Delta, \gamma, R, \textbf{r})$, see  e.g., \cite{Kas95}, is a quasi-triangular Hopf algebra $(A, \eta, m, \varepsilon, \Delta, \gamma, R)$
with a central, invertible element $\textbf{r} \in A $, called \textit{ribbon element}, such that

\begin{align*}
&\textbf{r}^{2}=u\gamma(u), \quad \gamma(\textbf{r})=\textbf{r},\quad \varepsilon(\textbf{r})=1, \quad \Delta(\textbf{r})=(R_{21}R)^{-1}(\textbf{r}\otimes \textbf{r}),
\end{align*}
where 
\begin{align}\label{u}
u=\sum  S(\beta)\alpha
\end{align}
 with $R=\sum \alpha \otimes \beta$.


\subsection{Universal quantum invariant for framed tangles}\label{univinv}
In this section, we recall the universal quantum invariant  \cite{O, Law89, Law90}  for framed tangles associated to a ribbon Hopf algebra $(A, \eta, m, \varepsilon, \Delta, \gamma, R, \textbf{r})$.

Let $T=T_1\cup \cdots \cup T_n$ be an $n$-component, framed, ordered tangle.

Set $$N=\mathrm{Span}_k\{ab-ba \ |\ a,b \in A\}\subset A.$$

For $i=1,\ldots, n$, let 
\begin{align*}
 A_i=\begin{cases}
A \quad &\text{if} \quad \partial T_i \not=  \emptyset,
\\
A/N \quad &\text{if} \quad \partial T_i=  \emptyset.
\end{cases}
\end{align*}

We define the universal quantum invariant $J(T)\in  A_1 \otimes \cdots \otimes  A_n$  in three steps as follows. We follow the notation in \cite{sakie2}.

\textbf{Step 1. Choose a diagram.}
We choose a diagram $D$ of $T$ which is obtained by pasting, horizontally and vertically, 
copies of the fundamental tangles depicted in Figure \ref{fig:fundamental}.  
\begin{figure}[!h]
\centering
\includegraphics[width=9cm,clip]{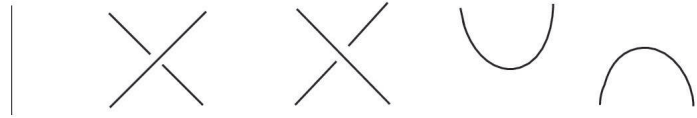}
\caption{Fundamental tangles, where the orientation of each strand is arbitrary
 }\label{fig:fundamental}
\end{figure}

\textbf{Step 2.  Attach labels.}
We attach labels on the copies of the fundamental tangles in the diagram, 
following the rule described in Figure \ref{fig:cross}, where each $\gamma'$ should be replaced with $\gamma$
if the string is oriented upwards, and with the identity otherwise.  
We do not attach any label to the other copies of fundamental tangles, i.e.,  to a straight strand and to a local maximum or minimum oriented from right to left.

\begin{figure}[!h]
\centering
\begin{picture}(300,70)
\put(50,25){\includegraphics[width=7.5cm,clip]{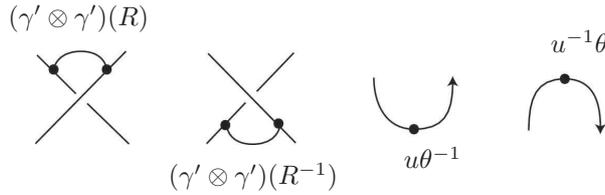}}
\put(40,70){$(\gamma'\otimes \gamma')(R)$}
\put(100,10){$(\gamma'\otimes \gamma')(R^{-1})$}
\put(188,16){$u\theta ^{-1}$}
\put(243,60){$u^{-1}\theta$}
\end{picture}
\caption{How to place labels  on the fundamental tangles}\label{fig:cross}
\end{figure}

\textbf{Step 3.  Read the labels.}
We define the $i$th tensorand of $J({D})$ as the product 
of the labels on the $i$th component of $D$, where the labels are read off along $T_i$
reversing the orientation, and  written from left to  right. Here, if $T_i$ is a closed component, then we choose arbitrary point $p_i$ on $T_i$ and  read the label from $p_i$.  
The labels on the crossings are read as in Figure   \ref{fig:cross2}.

\begin{figure}[!h]
\centering
\begin{picture}(300,120)
\put(50,20){\includegraphics[width=5.8cm,clip]{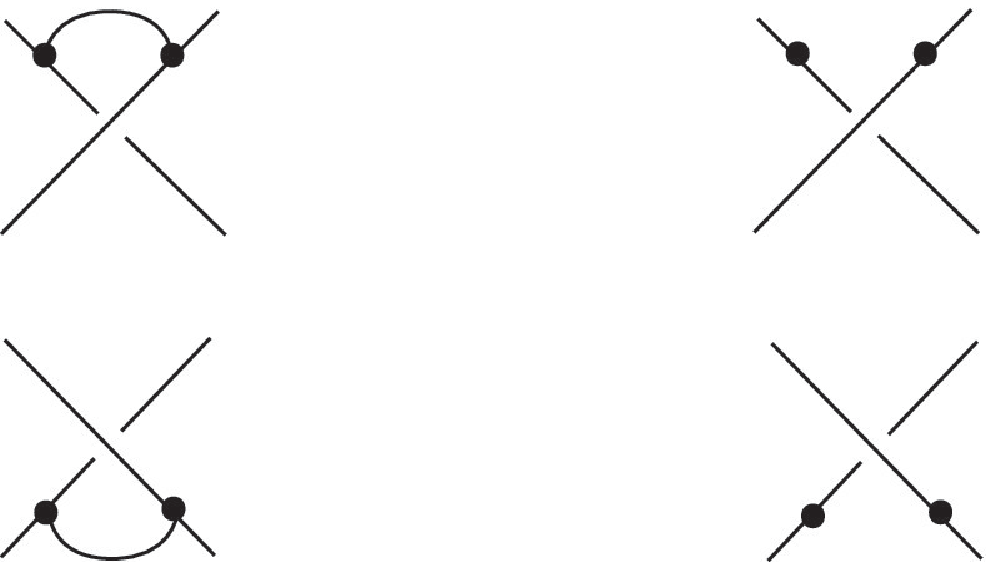}}
\put(42,120){$(\gamma'\otimes \gamma')(R)$}
\put(100,90){$= \quad \sum$}
\put(150,95){$\gamma'(\alpha)$}
\put(208,95){$\gamma'(\beta)$}

\put(42,10){$(\gamma'\otimes \gamma')(R^{-1})$}
\put(100,40){$=\quad \sum $}
\put(150,35){$\gamma'(\alpha^{-})$}
\put(208,35){$\gamma'(\beta^{-})$}
\end{picture}
\caption{How to read the labels on crossings, where $R^{-1}=\sum \alpha^- \otimes \beta ^-$}\label{fig:cross2}
\end{figure}

As is well known \cite{O},  $J(T):=J(D)$ does not depend on the choice of the diagram and the base points $p_i$, and thus defines an isotopy invariant of  tangles.

\begin{figure}[!h]
\centering
\begin{picture}(300,100)
\put(70,20){\includegraphics[width=4.5cm,clip]{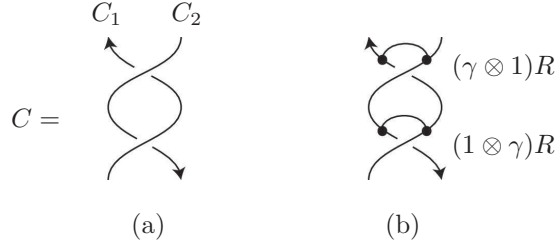}}
\put(65,80){$C_1$}
\put(95,80){$C_2$}
\put(35,40){$C=$}
\put(200,60){$(\gamma \otimes 1)R$}
\put(200,30){$(1\otimes \gamma)R$}
\put(80,0){(a)}
\put(175,0){(b)}
\end{picture}
\caption{(a) A  tangle diagram $C$, (b) The label put on $C$ }\label{fig:T_h}
\end{figure}

For example, for the tangle $C=C_1\cup C_2$ shown in Figure \ref{fig:T_h}, we have
\begin{align}
J({C})&=\sum \gamma(\alpha )\gamma(\beta') \otimes \alpha'\beta,
\end{align} 
where $R=\sum \alpha\otimes \beta=\sum \alpha'\otimes \beta'.$

\section{Drinfeld double and Heisenberg double}\label{dh}

Let $(A, \eta, m, \varepsilon, \Delta, \gamma)$ be a finite dimensional Hopf algebra. 
Let $A^*=\mathrm{Hom}_k(A,k)$. 
Define the pairing
\begin{align}\label{pairex}
\langle \ , \ \rangle \co A^{*} \otimes A \to k,  \quad f\otimes x \mapsto f(x),
\end{align}
and  extend it  to  
$$\langle \ , \ \rangle\co (A^{*})^{\otimes n} \otimes A^{\otimes n} \to k,$$
for $n\geq 1$,  by
\begin{align*}
\langle f_{1}\otimes \cdots \otimes f_{n}, x_{1}\otimes \cdots \otimes x_{n} \rangle=\langle f_{1}, x_{1} \rangle \cdots \langle f_{n}, x_{n} \rangle.
\end{align*}
Then the dual Hopf algebra 
$$A^*=(A^*, \eta_{A^*}=\varepsilon^*, m_{A^*}=\Delta^*, \varepsilon_{A^*}=\eta^*, \Delta_{A^*}=m^*,  \gamma_{A^*}=\gamma^*)$$ is defined using the transposes of the morphisms of $A$, i.e.,  is defined  uniquely by
\begin{align*}
\langle \varepsilon ^*(a), x\rangle&=a\varepsilon (x), \quad a\in k, x\in A,
\\
\langle \Delta^*(f\otimes g), x\rangle&=\langle f\otimes g, \Delta(x)\rangle, \quad f,g \in A^{*}, x\in A,
\\
\eta^*(f)a&=\langle f, \eta (a)\rangle, \quad  f\in A^{*}, a\in k,
\\
\langle m^*(f), x\otimes y\rangle&=\langle f, m(x\otimes y)\rangle, \quad f\in A^{*}, x, y\in A,
\\
\langle \gamma^*(f), x\rangle&=\langle f, \gamma (x) \rangle, \quad  f\in A^{*}, x\in A.
\end{align*}

\subsection{Drinfeld double and Yang-Baxter equation}
For any finite dimensional Hopf algebra with invertible antipode, the Drinfeld quantum double construction gives a quasi-triangular  Hopf algebra \cite{Dri87}.
Here, we follow the notation in \cite{Ka95}.

Let $(A, \eta, m, \varepsilon, \Delta, \gamma, \gamma^{-1})$ be a finite dimensional Hopf algebra with invertible antipode, $A^{\op}=(A, \eta, m^{\op}, \varepsilon, \Delta, \gamma^{-1}, \gamma)$  the opposite Hopf algebra and 
$(A^{\op})^*=(A^*,  \varepsilon^*, \Delta^*, \eta^*, (m^{\op})^*, (\gamma^{-1})^*, \gamma^*)$  the dual of the opposite Hopf algebra, where $m^{\op}=m\circ \tau_{A,A}$.
For simplicity, we set
$$\bar \gamma=\gamma^{-1}.$$ 

Let $\Delta ^{(0)}=\id$ and $\Delta^{(n)}=(\Delta \otimes 1^{\otimes n-1})\Delta ^{(n-1)}$ for $n\geq 1$. 
In what follows, for $x\in A$ or  $x\in A^*$, we use the notation 
\begin{align*}
\Delta(x)=\Delta^{(1)}(x)&=\sum x'\otimes x''=\sum x^{(1)} \otimes x^{(2)},
\\
 (\Delta \otimes 1)\Delta(x)=\Delta^{(2)}(x)&=\sum x'\otimes x''\otimes x'''=\sum x^{(1)} \otimes x^{(2)} \otimes x^{(3)},
\\
\Delta^{(n)}(x)&=\sum x^{(1)}\otimes \cdots \otimes x^{(n+1)},
\end{align*}
for $n \geq 3$.
We have $$(m^{\op})^*(f)=\Delta^{\op}(f)=\sum f''\otimes f'$$ for $f\in (A^{\op})^*$. \footnote{In \cite{Ka95}, he uses the notation $\Delta^{\op}(f)=\sum f'\otimes f''$.}

There is a unique left action 
$$A\otimes (A^{\op})^*  \to (A^{\op})^*, \quad a \otimes f \mapsto a\cdot f,$$ such that
\begin{align*}
\langle a \cdot f, x\rangle= \sum \langle f,  \bar \gamma(a'')xa'\rangle,
\end{align*}
for $a,x\in A$ and $f\in (A^{\op})^*$, which induces the left $A$-module coalgebra structure on $(A^{\op})^*$.
Also, there is a unique right action 
$$A\otimes (A^{\op})^*  \to A, \quad a \otimes f \mapsto a^f,$$  such that
\begin{align*}
a^f = \sum f(\bar \gamma(a''')a')a''
\end{align*}
for $a\in A $
and $f\in (A^{\op})^*$, which induces the right  $(A^{\op})^*$-module coalgebra structure on $A$.

The Drinfeld double $$D(A)=((A^{\op})^*\otimes A, \eta_{D(A)}, m_{D(A)}, \varepsilon_{D(A)}, \Delta_{D(A)}, \gamma_{D(A)},R)$$ is the quasi-triangular Hopf algebra  defined as the bicrossed product of  $A$ and $(A^{\op})^*$.
Its unit, counit, and comultiplication are given by these of $(A^{\op})^*\otimes A$, i.e., we have
\begin{align*}
\eta_{D(A)}(1)&=\eta_{(A^{\op})^*\otimes A}(1)=1\otimes 1,
\\
\varepsilon_{D(A)}(f\otimes a)&=\varepsilon_{(A^{\op})^*\otimes A}(f\otimes a)=f(1)\varepsilon (a),
\\
\Delta_{D(A)}(f\otimes a)&=\Delta_{(A^{\op})^*\otimes A}(f\otimes a)=\sum f''\otimes a'\otimes f'\otimes a'',
\end{align*}
for $a\in A$ and $f\in (A^{\op})^*$.
Its multiplication is given 
 by
\begin{align}\label{mudri}
m_{D(A)}\left((f\otimes a)\otimes (g\otimes b)\right)&=\sum f(a'\cdot g'')\otimes a''^{g'}b
\\
&=\sum fg(\bar \gamma(a''')?a')\otimes a''b,
\end{align}
for $a,b\in A$ and $f,g\in (A^{\op})^*$, where the question mark $?$ denotes
 a place of the variable.
 Its antipode is given by
\begin{align*}
\gamma_{D(A)}(f\otimes a)&=\sum \gamma(a'')\cdot \bar \gamma^*(f')\otimes \gamma(a')^{\bar \gamma^*(f'')},
\end{align*}
for $a\in A$ and $f\in (A^{\op})^*$.

Fix a basis $\{e_{a}\}_{a\in \mathcal{I}}$ of $A$  and its dual basis $\{e^{a}\}_{a\in \mathcal{I}}$ of $A^{*}$.
The universal $R$-matrix is defined as the canonical element 
\begin{align*}
R=\sum_a (1\otimes e_a)\otimes (e^a \otimes 1) \in D(A)\otimes D(A).
\end{align*}

\subsection{Heisenberg double and pentagon relation}

Let $A$ be a finite dimensional  Hopf algebra  with an invertible antipode as in the previous section.
The Heisenberg double $$H(A)=(A^*\otimes A, \eta_{H(A)}, m_{H(A)})$$ is the algebra with the unit
$\eta_{H(A)}(1)=\eta_{A^*\otimes A}(1)=1\otimes 1$ and the multiplication
\begin{align}\label{muhei}
m_{H(A)}\left((f\otimes a)\otimes (g\otimes b)\right)&=\sum fg(?a')\otimes a''b,
\end{align} 
for $a,b\in A$ and $f,g\in (A^{\op})^*$.

Kashaev showed the following.

\begin{theorem}[{Kashaev \cite{Kas97}}]\label{Spr}
The canonical element 
$$S=\sum _a(1\otimes e_{a}) \otimes (e^{a} \otimes 1)\in H(A) \otimes H(A)$$ satisfies the pentagon relation
\begin{align}\label{pentagon}
S_{12}S_{13}S_{23}=S_{23}S_{12} \quad \in H(A)^{\otimes 3}.
\end{align}
\end{theorem}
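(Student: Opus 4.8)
The plan is to reduce the pentagon relation to the coassociativity of the comultiplication $\Delta_{A^*}=m^*$ of $A^*$ by passing to the faithful \emph{Fock representation} of $H(A)$ on the vector space $A^*$. First I would equip $A^*$ with a left $H(A)$-module structure $\rho\co H(A)\to\mathrm{End}(A^*)$ by
\[
\rho(f\otimes x)(\phi)=f\cdot(x\rightharpoonup\phi),\qquad\langle x\rightharpoonup\phi,\,y\rangle:=\langle\phi,\,yx\rangle,
\]
where $f\cdot(-)$ denotes multiplication in $A^*$. Checking that $\rho$ is an algebra map against the product (\ref{muhei}) is a one-line Sweedler computation whose only nontrivial ingredient is the identity $x\rightharpoonup(gh)=\sum(x'\rightharpoonup g)(x''\rightharpoonup h)$, which is exactly the statement that $\Delta_{A^*}$ is multiplicative. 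For finite-dimensional $A$ this representation is faithful: $A^*$ acts on itself by left multiplication while $x\mapsto(x\rightharpoonup-)$ is an injective action of $A$, and these jointly generate $\mathrm{End}(A^*)$, giving the classical isomorphism $H(A)\cong\mathrm{End}(A^*)$ (both sides of dimension $(\dim A)^2$). Hence $\rho^{\otimes3}$ is injective and it suffices to prove the pentagon after applying it.

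Second I would compute the operator $W:=(\rho\otimes\rho)(S)$ on $A^*\otimes A^*$. Since $\rho(1\otimes e_a)(\phi)=\phi(?\,e_a)$ and $\rho(e^a\otimes 1)(\psi)=e^a\psi$, and since the dual bases satisfy $\sum_a\phi(?\,e_a)\otimes e^a=\Delta_{A^*}(\phi)$, I obtain the transparent formula
\[
W(\phi\otimes\psi)=\sum\phi'\otimes\phi''\psi,
\]
i.e. $W$ is the fundamental (Kac--Takesaki) operator attached to the bialgebra $A^*$.

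Third I would verify $W_{12}W_{13}W_{23}=W_{23}W_{12}$ by evaluating both composites on $\phi\otimes\psi\otimes\chi$ (using $W_{12}(\phi\otimes\psi\otimes\chi)=\sum\phi'\otimes\phi''\psi\otimes\chi$, and the analogous formulas for $W_{13},W_{23}$). Collapsing the iterated comultiplications by coassociativity of $\Delta_{A^*}$, and using multiplicativity to split $\Delta_{A^*}(\phi''\psi)$ on one side, both products send $\phi\otimes\psi\otimes\chi$ to
\[
\sum\phi^{(1)}\otimes\phi^{(2)}\psi'\otimes\phi^{(3)}\psi''\chi,
\]
so the two operators coincide. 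By injectivity of $\rho^{\otimes3}$ this yields $S_{12}S_{13}S_{23}=S_{23}S_{12}$ in $H(A)^{\otimes3}$.

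I expect the only non-formal step to be the faithfulness of $\rho$, i.e. the identification $H(A)\cong\mathrm{End}(A^*)$, which is where finite-dimensionality is essential; the remaining manipulations are purely formal consequences of the coassociativity and multiplicativity of $\Delta_{A^*}$, and in particular the antipode plays no role. As a cross-check avoiding representations altogether, one may instead expand $S_{12}S_{13}S_{23}$ and $S_{23}S_{12}$ directly in $H(A)^{\otimes3}$ via (\ref{muhei}); after applying $\sum_{a,b}e_ae_b\otimes e^a\otimes e^b=\sum_d e_d\otimes\Delta_{A^*}(e^d)$ to the left-hand side the two expansions agree, but this route is more computational and less conceptual than the operator identity above.
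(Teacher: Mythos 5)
Your proof is correct, but it takes a genuinely different route from the one the paper relies on: the paper gives no proof of Theorem \ref{Spr} at all, quoting it from Kashaev \cite{Kas97}, where (as in your closing ``cross-check'' paragraph) the pentagon is verified by expanding both sides directly in $H(A)^{\otimes 3}$ using the product (\ref{muhei}) and dual-basis identities. Your main argument is instead the Baaj--Skandalis point of view \cite{BS93}: represent $H(A)$ on $A^*$ by $\rho(f\otimes x)(\phi)=f\cdot\phi(?x)$, observe that $(\rho\otimes\rho)(S)$ is the Kac--Takesaki operator $W(\phi\otimes\psi)=\sum\phi'\otimes\phi''\psi$, and read off the pentagon from coassociativity and multiplicativity of $\Delta_{A^*}$; I checked your module structure, the homomorphism property of $\rho$, the identity $\sum_a\phi(?e_a)\otimes e^a=\Delta_{A^*}(\phi)$, and both evaluations of the pentagon against the paper's conventions, and they are all right. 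The one step you assert rather than prove, faithfulness of $\rho$, i.e.\ $H(A)\cong\mathrm{End}(A^*)$, is indeed classical; you could simply cite Lu \cite{Lu94}, which is already in the paper's bibliography. Be aware, though, that this is exactly where the Hopf (not merely bialgebra) structure enters: injectivity of $\rho$ is proved by contracting with the inverse antipode (from $\rho(\sum_a g_a\otimes e_a)=0$ one gets $\sum y^{(2)}N(y^{(1)})\otimes y^{(3)}=0$ with $N(w)=\sum_a g_a(w)e_a$, and contracting the third leg against the rest via $\bar\gamma$, using $\sum\bar\gamma(y'')y'=\varepsilon(y)1$, yields $N=0$); for a finite-dimensional bialgebra without antipode the Fock representation can fail to be injective. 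So your remark that ``the antipode plays no role'' is accurate only for the formal manipulations, not for the proof as a whole. In exchange, your route is more conceptual -- it exhibits the pentagon as coassociativity in disguise -- while the direct expansion that Kashaev (and implicitly the paper) uses needs neither faithfulness nor any antipode, which is what the elementary route buys.
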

\subsection{Drinfeld double and Heisenberg double}

Let  $$ H(A^*)=(A\otimes A^*, \eta_{H(A^*)}, m_{H(A^*)})$$ be the Heisenberg double of the dual Hopf algebra $A^*$ of $A$, where we identify $(A^*)^*$ and $A$ in the standard way.

Set $A^{\mathrm{opcop}}=(A, \eta, m^{\op}, \varepsilon, \Delta^{\op}, \gamma, \gamma^{-1})$.
We have the following lemma.
\begin{lemma}\label{comu}
The algebras $H(A^*)$ and $H(A)^{\op}$ are isomorphic via the unique isomorphism $\Gamma\circ \tau$
such that
\begin{align*}
&
 \tau=\tau_{A^*,A}\co  H(A^{*})\to H(A^{\mathrm{opcop}})^{\op}, 
\quad   x\otimes  f \mapsto  f \otimes x,
\\
 &\Gamma= \bar{\gamma}^*\otimes \gamma\co H(A^{\mathrm{opcop}}) ^{\op}
\to H(A)^{\op},\quad  f \otimes  x\mapsto \bar \gamma^*(f)\otimes \gamma (x).
\end{align*}

\end{lemma}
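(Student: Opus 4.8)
The plan is to show that $\tau$ and $\Gamma$ are each algebra isomorphisms between the indicated algebras, whence the composite $\Gamma\circ\tau\co H(A^*)\to H(A)^{\op}$ is one as well; uniqueness is automatic, since the two formulas already prescribe a single $k$-linear map on all of $H(A^*)$, and bijectivity of each factor is clear (the flip is bijective, and $\gamma,\bar\gamma^*$ are bijective because $\bar\gamma=\gamma^{-1}$). The guiding idea is that $\tau$ is the flip implementing the $A\leftrightarrow A^*$ duality of the Heisenberg double, while $\Gamma$ is the functoriality of $H(-)$ applied to the antipode, viewed as a bialgebra isomorphism $\gamma\co A^{\mathrm{opcop}}\to A$.

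For $\Gamma$ I would first isolate the general principle that a bialgebra isomorphism $\psi\co B\to C$ induces an algebra isomorphism $(\psi^{-1})^*\otimes\psi\co H(B)\to H(C)$. Expanding both sides of a product $(p\otimes a)(q\otimes b)$ with (\ref{muhei}) and using that $\psi$ is multiplicative and comultiplicative reduces the claim to the naturality of the coregular action, namely $(\psi^{-1})^*\!\big(q(?\,a)\big)=\big((\psi^{-1})^*q\big)(?\,\psi(a))$, which is a one-line consequence of $\psi^{-1}$ being an algebra map. The antipode axioms say precisely that $\gamma$ reverses both product and coproduct, i.e. $\gamma\co A^{\mathrm{opcop}}\to A$ is a bialgebra isomorphism with inverse $\bar\gamma$; since $(\gamma^{-1})^*=\bar\gamma^*$, the induced map is exactly $\Gamma=\bar\gamma^*\otimes\gamma\co H(A^{\mathrm{opcop}})\to H(A)$. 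Finally, any algebra isomorphism $X\to Y$ is simultaneously one $X^{\op}\to Y^{\op}$, which yields $\Gamma$ in the form stated.

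For $\tau$ I would show the flip is an algebra anti-isomorphism $H(A^*)\to H(A^{\mathrm{opcop}})$, equivalently an isomorphism onto $H(A^{\mathrm{opcop}})^{\op}$; concretely this means checking $\tau(uv)=\tau(v)\,\tau(u)$ with the product on the right taken in $H(A^{\mathrm{opcop}})$. I would expand $(x\otimes f)(y\otimes g)$ in $H(A^*)=(A^*)^*\otimes A^*$ by applying (\ref{muhei}) to the Hopf algebra $A^*$, and separately expand $(g\otimes y)(f\otimes x)$ in $H(A^{\mathrm{opcop}})$ by applying (\ref{muhei}) to $A^{\mathrm{opcop}}$, keeping in mind that $A^{\mathrm{opcop}}$ carries the product $m^{\op}$ and coproduct $\Delta^{\op}$, so that $(A^{\mathrm{opcop}})^*$ carries the opposite product of $A^*$ and the coregular action becomes left multiplication inside the functional. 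These three reversals are exactly what cancels the interchange of tensor factors produced by the flip, and the whole comparison collapses to the single pairing identity $f(b\,?)=\sum f'(b)\,f''$ for $\Delta_{A^*}(f)=\sum f'\otimes f''$ (together with its $A$-side companion $y(?\,h)=\sum h(y'')\,y'$), which is nothing but the definitions $\Delta_{A^*}=m_A^*$ and $\Delta_A$. This identifies the coregular action occurring in $H(A^{\mathrm{opcop}})$ with the coproduct occurring in $H(A^*)$ and finishes the verification.

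I expect the only real obstacle to be bookkeeping: faithfully transporting the one multiplication formula (\ref{muhei}) through the three dualizations and oppositions ($A^*$, $A^{\mathrm{opcop}}$, and the two $\op$'s) and, in particular, correctly distinguishing the two coregular actions $f(?\,a)$ and $f(a\,?)$ in each algebra. Once the algebra structure of $(A^{\mathrm{opcop}})^*$ and the opposite products are written out in Sweedler notation, every $\op$ cancels against the flip, and nothing beyond the elementary pairing identities above and the naturality of the coregular action under $\gamma$ is needed.
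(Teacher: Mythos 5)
Your proposal is correct. Its skeleton---factor the map as $\Gamma\circ\tau$ and check that each factor is an algebra isomorphism---is exactly the paper's, and your $\tau$ half is in substance identical to the paper's: the paper expands $(x\otimes f)\cdot_{H(A^*)}(y\otimes g)$ and $(g\otimes y)\cdot_{H(A^{\mathrm{opcop}})}(f\otimes x)$ via (\ref{muhei}) and matches them using precisely the pairing identities you cite. Where you genuinely differ is the $\Gamma$ half. The paper proves it by another inline Sweedler computation, using step by step that $\bar\gamma^*$ is anti-multiplicative, that $\gamma$ is anti-comultiplicative, and that $\langle\bar\gamma^*(f'),\gamma(y'')\rangle=\langle f',y''\rangle$. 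You instead isolate a general functoriality principle---a bialgebra isomorphism $\psi\co B\to C$ induces an algebra isomorphism $(\psi^{-1})^*\otimes\psi\co H(B)\to H(C)$, reduced to the naturality of the coregular action---and apply it to $\gamma\co A^{\mathrm{opcop}}\to A$. The same Hopf-theoretic facts are consumed either way, but your packaging buys something: it explains conceptually why the combination $\bar\gamma^*\otimes\gamma$ occurs (it is $H(-)$ applied to the antipode regarded as a bialgebra isomorphism), and the principle is reusable beyond this lemma, whereas the paper's version is self-contained and fully explicit. Two points to make explicit in a final write-up: first, the multiplication (\ref{muhei}) uses only the bialgebra structure, so $H(A^{\mathrm{opcop}})$ requires no choice of antipode and your principle legitimately applies at the bialgebra level; second, the fact that $\gamma$ reverses both product and coproduct is a standard \emph{consequence} of the antipode axioms rather than the axioms themselves, so it should be cited or proved rather than asserted as definitional.
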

\begin{proof}
We have
\begin{align*}
\tau (x\otimes  f )\cdot_{ H(A^{\mathrm{opcop}})^{\op}} \tau (y\otimes g ) 
=&\left( f \otimes  x\right) \cdot_{ H(A^{\mathrm{opcop}})^{\op}} \left(g \otimes  y\right)
\\
=& \left(g \otimes  y\right)\cdot_{ H(A^{\mathrm{opcop}})}\left( f \otimes  x\right)
\\
=&\sum  g  \cdot_{ (A^*)^{\op}} \langle  f, ? \cdot_{ A^{\op}} y''\rangle \otimes   y'\cdot_{ A^{\op}} x
\\
=&\sum  \langle f, y''?\rangle g\otimes x y'
\\
=&\sum  \langle f', y''\rangle f'' g\otimes x y'
\\
=&\sum   f'' g \otimes  x y' \langle f', y'' \rangle
\\
=&\sum \tau \left(x y' \langle f', y'' \rangle \otimes f''g \right) 
\\
=&\sum \tau \left(x \langle ? f',y \rangle \otimes f''g \right) 
\\
=&\tau \left((x\otimes f )\cdot_{ H(A^{*})}  (y\otimes  g )\right), 
\end{align*}
and we have
\begin{align*}
\Gamma(f \otimes  x)\cdot_{H(A)^{\op}} \Gamma( g \otimes  y)
=&\left(\bar \gamma^*(f)  \otimes \gamma(x)\right) \cdot_{ H(A)^{\op}} \left(\bar \gamma^*(g)  \otimes \gamma(y)\right)
\\
=&\left(\bar \gamma^*(g)  \otimes \gamma(y)\right)\cdot_{H(A)}\left(\bar \gamma^*(f)  \otimes \gamma(x)\right) 
\\
=&\sum\bar \gamma^*(g) \langle \bar \gamma ^*(f), ?\gamma(y)'\rangle \otimes \gamma(y)''\gamma(x)
\\
=&\sum\bar \gamma ^* (g)\bar \gamma ^*(f)' \langle \bar \gamma ^* (f)'',\gamma (y)' \rangle \otimes  \gamma(y)''\gamma(x)
\\
=&\sum\bar \gamma ^* (g)\bar \gamma ^*(f'') \langle \bar \gamma ^* (f'),\gamma (y'') \rangle \otimes  \gamma(y')\gamma(x)
\\
=&\sum (\bar \gamma ^* \otimes \gamma)\left( \langle f',y'' \rangle f''g \otimes x y' \right)
\\
=&\sum (\bar \gamma ^* \otimes \gamma)\left( \langle f,y''? \rangle g \otimes x y' \right)
\\
=&\sum \Gamma\left( g \cdot_{ (A^*)^{\op}} \langle  f, ?\cdot_{ A^{\op}}  y''\rangle  \otimes   y'  \cdot_{ A^{\op}} x\right)
\\
=&\Gamma\left(( g \otimes  y) \cdot_{H(A^{\mathrm{opcop}})}( f \otimes  x) \right)
\\
=&\Gamma\left(( f \otimes  x)\cdot_{H(A^{\mathrm{opcop}})^{\op}} (g \otimes  y) \right),
\end{align*}
which completes the proof.
\end{proof}

Set
\begin{align}
\phi(1\otimes e_{a})= & \sum 1\otimes e_a' \otimes 1 \otimes \gamma (e_a'')\label{ext}
\in H(A)\otimes H(A)^{\op},
\\
\phi(e^{a}\otimes 1)=&\sum (e^a)'' \otimes 1 \otimes \bar \gamma^* ((e^a)')\otimes 1\label{ext2}
 \in H(A)\otimes H(A)^{\op}.
\end{align}

Kashaev \cite{Kas97} stated without proof that the Drinfeld double $D(A)$ can be realized as a subalgebra in the tensor product $H(A)\otimes H(A)^{\op}$
of the Heisenberg double $H(A)$ and its opposite algebra $H(A)^{\op}$ as follows. \footnote{In \cite{Kas97} he uses $H(A^*)$ instead of $H(A)^{\op}.$}

\begin{theorem}[Kashaev \cite{Kas97}]\label{phi}
There is a unique algebra homomorphism 
\begin{align}
\phi \co D(A) \to H(A)\otimes H(A)^{\op}
\end{align} extending (\ref{ext}) and (\ref{ext2}).
  \end{theorem}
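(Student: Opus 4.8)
The plan is to exploit the factorization $D(A)=(A^{\op})^*\otimes A$. A direct computation with (\ref{mudri}) gives $(f\otimes 1)(1\otimes a)=f\otimes a$, so every element of $D(A)$ is a sum of products of the generators $1\otimes e_a$ and $e^a\otimes 1$; consequently any algebra homomorphism extending (\ref{ext}) and (\ref{ext2}) is forced on all of $D(A)$, which gives uniqueness. For existence I would set $\alpha\co A\to H(A)\otimes H(A)^{\op}$, $\alpha(a)=\phi(1\otimes a)$, and $\beta\co(A^{\op})^*\to H(A)\otimes H(A)^{\op}$, $\beta(f)=\phi(f\otimes 1)$, define $\phi(f\otimes a):=\beta(f)\alpha(a)$, and reduce multiplicativity to three checks: (a) $\alpha$ is an algebra homomorphism; (b) $\beta$ is an algebra homomorphism; and (c) the single straightening relation of the double cross product, $(1\otimes a)(g\otimes 1)=\sum(a'\cdot g'')\otimes a''^{g'}$, is respected, i.e.\ $\alpha(a)\beta(g)=\sum\beta(a'\cdot g'')\,\alpha(a''^{g'})$. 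Granting (a)--(c), every product $(f\otimes a)(g\otimes b)$ is brought to normal form by one use of (c) followed by (a) and (b), and comparison with (\ref{mudri}) yields multiplicativity.

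Steps (a) and (b) are the easy checks, because $\alpha$ and $\beta$ each factor as a composite of algebra homomorphisms. By (\ref{muhei}) the map $j_1\co A\to H(A),\ a\mapsto 1\otimes a$ is an algebra homomorphism, and since $\gamma$ is an algebra antihomomorphism the map $j_2\co A\to H(A)^{\op},\ a\mapsto 1\otimes\gamma(a)$ is one as well; as $\Delta$ is an algebra homomorphism, $\alpha=(j_1\otimes j_2)\circ\Delta$ is too. Dually, $k_1\co A^*\to H(A),\ f\mapsto f\otimes 1$ is an algebra homomorphism, and $\bar\gamma^*$ being an algebra antihomomorphism of $A^*$ makes $k_2\co A^*\to H(A)^{\op},\ f\mapsto\bar\gamma^*(f)\otimes 1$ one; composing with the algebra homomorphism $\Delta_{(A^{\op})^*}=\Delta^{\op}_{A^*}$ gives $\beta=(k_1\otimes k_2)\circ\Delta_{(A^{\op})^*}$, hence (b).

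The main obstacle is (c), the only step in which the two Heisenberg doubles genuinely interact. Expanding $\alpha(a)\beta(g)$ componentwise with (\ref{muhei}), the first ($H(A)$) factor produces, from the ``$?\,a'$'' in (\ref{muhei}), a right action of $a$ on $g$, while in the second ($H(A)^{\op}$) factor the reversed order turns the product into an already normal--ordered term carrying the antipode $\gamma(a'')$. The conceptual point is that the two--sided expression $\bar\gamma(a''')\,?\,a'$ that governs both the left action $a'\cdot g''$ and the right action $a''^{g'}$ in the Drinfeld double (\ref{mudri}) factorizes precisely into the ``$?\,a'$''-part, realized in $H(A)$, and the ``$\bar\gamma(a''')$''-part, realized via $\gamma$ in $H(A)^{\op}$. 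Matching the two sides then reduces to a bookkeeping computation using the explicit action formulas $\langle a\cdot f,x\rangle=\sum\langle f,\bar\gamma(a'')xa'\rangle$ and $a^{f}=\sum f(\bar\gamma(a''')a')a''$ together with coassociativity and the counit axioms. I expect this cross--factor bookkeeping to be the technical heart of the proof; it is exactly the $R$--matrix identity (\ref{rs}) read one generator at a time.
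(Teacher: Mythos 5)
Your plan follows the same route as the paper's proof: the paper also defines $\phi$ on a general element by the normal form $\phi(f\otimes x)=\phi(f\otimes 1)\phi(1\otimes x)$ (your $\beta(f)\alpha(x)$) and verifies multiplicativity exactly through your checks (a), (b), (c). Your treatment of (a) and (b) as composites of algebra homomorphisms, $\alpha=(j_1\otimes j_2)\circ\Delta$ and $\beta=(k_1\otimes k_2)\circ\Delta^{\op}_{A^*}$, is in fact cleaner than the paper's direct Sweedler expansions, and your uniqueness argument via $(f\otimes 1)(1\otimes a)=f\otimes a$ is correct (the paper leaves uniqueness implicit).

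The genuine gap is that step (c) --- the only place where the two Heisenberg factors interact and, by the paper's own account, the bulk of its proof --- is asserted rather than carried out, and your description of what it requires is not quite sufficient. Expanding $\alpha(x)\beta(f)$ with (\ref{muhei}) gives
\begin{align*}
\sum\langle f^{(3)},x^{(1)}\rangle\, f^{(2)}\otimes x^{(2)}\otimes\bar\gamma^*(f^{(1)})\otimes\gamma(x^{(3)}),
\end{align*}
while expanding $\phi\left((1\otimes x)\cdot_{D(A)}(f\otimes 1)\right)$ using (\ref{mudri}) gives
\begin{align*}
\sum\langle f^{(1)},\bar\gamma(x^{(5)})\rangle\,\langle f^{(2)},x^{(4)}\rangle\,\langle f^{(5)},x^{(1)}\rangle\, f^{(4)}\otimes x^{(2)}\otimes\bar\gamma^*(f^{(3)})\otimes\gamma(x^{(3)}).
\end{align*}
Matching these is \emph{not} a consequence of coassociativity, the counit axioms, and the action formulas alone, as you claim: the pivot of the paper's computation is the antipode axiom for the inverse antipode, $m^{\op}(1\otimes\bar\gamma)\Delta=\eta\varepsilon$, i.e. $\sum\bar\gamma(x'')x'=\varepsilon(x)1$, which is what collapses the extra pairings $\langle f^{(1)},\bar\gamma(x^{(5)})\rangle\langle f^{(2)},x^{(4)}\rangle$ to $\varepsilon(f^{(1)})\varepsilon(x^{(4)})$. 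So the plan is sound and would succeed if executed, but as written it defers the central computation and omits the one algebraic identity that makes it work.
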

We give the proof of Theorem \ref{phi} by showing $\phi$ explicitly.

\begin{proof}[{Proof of Theorem \ref{phi}}]
We define $\phi \co D(A) \to H(A)\otimes H(A)^{\op}$ by
\begin{align*}
\phi &= m_{H(A)\otimes H(A)^{\op}}\circ ((1\otimes \eta)^{\otimes 2} \otimes  (\eta \otimes 1)^{\otimes 2})\circ(1 \otimes  \bar \gamma^* \otimes 1 \otimes \gamma)\circ(\Delta^{\op}\otimes \Delta),
\end{align*}
i.e., we have
\begin{align*}
\phi(f\otimes x) &=\sum \langle \bar \gamma^* \left(f' \right)'', \gamma (x'') ' \rangle f''\otimes x'\otimes\bar \gamma^* \left(f' \right)'\otimes \gamma (x'')''
\\
&=\sum \langle  f', x''' \rangle f'''\otimes x'\otimes  \bar \gamma^* (f'' )\otimes \gamma (x''),
\end{align*}
for $f\in A^*$ and $x\in A$.

The map $\phi$ is an algebra homomorphism as follows.

\begin{align*}
\phi(1\otimes x)\phi(1\otimes y)&=\left( \sum  1\otimes x'\otimes 1 \otimes \gamma (x'')\right)
\cdot_{H(A)\otimes H(A)^{\op}}\left(\sum  1\otimes y'\otimes 1 \otimes \gamma (y'')\right)
\\
&= \sum  1\otimes x'y'\otimes 1 \otimes \gamma (x'') \cdot_{A^{\op}} \gamma (y'')
\\
&= \sum  1\otimes (xy)'\otimes 1 \otimes \gamma ((xy)'')
\\
&=\phi\left((1\otimes x) \cdot_{D(A)} (1\otimes y)\right),
\end{align*}

\begin{align*}
\phi(f\otimes 1)\phi(g\otimes 1)&=\left(\sum   f''\otimes 1 \otimes \bar \gamma^* (f')\otimes 1\right)
\cdot_{H(A)\otimes H(A)^{\op}}\left(\sum   g''\otimes 1 \otimes \bar \gamma^* (g')\otimes 1\right)
\\
&=\sum   f''g''\otimes 1 \otimes \bar \gamma^* (f') \cdot_{(A^*)^{\op}} \bar \gamma^* (g')\otimes 1
\\
&=\sum   (fg)''\otimes 1 \otimes \bar \gamma^* ((fg)')\otimes 1
\\
&=\phi\left((f\otimes 1) \cdot_{D(A)} (g\otimes 1)\right),
\end{align*}

\begin{align*}
\phi(f\otimes 1)\phi(1\otimes x)&=\left(\sum   f''\otimes 1 \otimes \bar \gamma^* (f')\otimes 1\right)
\cdot_{H(A)\otimes H(A)^{\op}}\left(\sum  1\otimes x'\otimes 1 \otimes \gamma (x'')\right)
\\
&=\sum f''\otimes x'\otimes \langle \bar \gamma^*(f')'',\gamma(x'')'\rangle  \bar \gamma^*\left( f'\right)'\otimes  \gamma (x'')''
\\
&=\sum\langle f', x'''\rangle f'''\otimes x'\otimes  \bar \gamma^*\left( f''\right)\otimes  \gamma (x'')
\\
&=\phi(f\otimes x)
\\
&=\phi\left( (f\otimes 1)\cdot_{D(A)} (1\otimes x)\right), 
\end{align*}
\begin{align*}
\phi(1\otimes x)\phi(f\otimes 1)&=\left(\sum  1\otimes x^{(1)}\otimes 1 \otimes \gamma (x^{(2)})\right)
\cdot_{H(A)\otimes H(A)^{\op}}\left(\sum   f^{(2)}\otimes 1 \otimes \bar \gamma^* (f^{(1)})\otimes 1\right)
\\
&=\sum\langle f^{(3)}, x^{(1)}\rangle f^{(2)}\otimes x^{(2)}\otimes \bar \gamma^*\left( f^{(1)}\right)\otimes  \gamma (x^{(3)})
\\
&=\sum\langle f^{(4)}, x^{(1)}\rangle\varepsilon (f^{(1)})\varepsilon\left(x^{(4)}\right)
 f^{(3)}\otimes x^{(2)}\otimes  \bar\gamma^*\left( f^{(2)}\right)\otimes  \gamma (x^{(3)})
\\
&=\sum\langle f^{(4)}, x^{(1)}\rangle\langle  f^{(1)}, \bar\gamma( x^{(5)}) x^{(4)}\rangle
 f^{(3)}\otimes x^{(2)}\otimes \bar \gamma^*\left( f^{(2)}\right)\otimes  \gamma (x^{(3)})
\\
&=\sum\langle f^{(5)}, x^{(1)}\rangle\langle  f^{(1)}, \bar\gamma( x^{(5)})\rangle\langle  f^{(2)}, x^{(4)}\rangle
 f^{(4)}\otimes x^{(2)}\otimes \bar \gamma^*\left( f^{(3)}\right)\otimes  \gamma (x^{(3)})
\\
&=\phi\left( \langle f^{(1)},\gamma(x^{(3)})\rangle  \langle f^{(3)}, x^{(1)}\rangle f^{(2)}\otimes x^{(2)}\right)
\\
&=\phi\left( (1\otimes x)\cdot_{D(A)} (f\otimes 1)\right),
\end{align*}
where the fourth identity follows from $m^{\op}(1\otimes \bar \gamma )\Delta=\eta\varepsilon$.

The map $\phi$ satisfies  (\ref{ext}) and (\ref{ext2}) as follows.
\begin{align*}
\phi(1\otimes e_a)
&=\sum \langle  1, e_a''' \rangle 1\otimes e_a'\otimes 1\otimes \gamma \left(e_a'')\right).
\\
&=\sum  1\otimes e_a'\otimes 1 \otimes \gamma (e_a'')
\\
&=\phi(1\otimes e_a), 
\\
\phi(e^a\otimes 1)&=\sum \langle  (e^a)', 1 \rangle (e^a)'''\otimes 1\otimes \left(\bar \gamma^* (e^a)'' \right)\otimes 1.
\\
&=\sum   (e^a)''\otimes 1 \otimes \bar \gamma^* ((e^a)')\otimes 1
\\
&=\phi(e^a\otimes 1). 
\end{align*}

Thus we have the assertion.
\end{proof}

Set 
\begin{align*}
\hat R&= \phi^{\otimes 2} (R)=\sum_a \phi(1\otimes e_{a})\otimes \phi(e^a\otimes 1)
\\
&=\sum  1\otimes e_a'\otimes 1 \otimes \gamma (e_a'')\otimes  (e^a)''\otimes 1 \otimes \bar \gamma^* ((e^a)')\otimes 1
& \in \left(H(A)\otimes H(A)^{\op}\right)^{\otimes 2}.
\end{align*}
Since $\phi^{\otimes 2}$ is an algebra homomorphism, the element $\hat R$ also satisfies the quantum Yang-Baxter equation:
\begin{align}\label{qy}
\hat R_{12}\hat R_{13}\hat R_{23}=\hat R_{23}\hat R_{13}\hat R_{12},
\end{align}
where we use the notation (\ref{subs}) treating  $H(A)\otimes H(A)^{\op}$ as one algebra. If we treat $H(A)\otimes H(A)^{\op}$ as the tensor of $H(A)$ and $H(A)^{\op}$, we have 
 \begin{align*}
\hat R_{1234}\hat R_{1256}\hat R_{3456}=\hat R_{3456}\hat R_{1256}\hat R_{1234}.
\end{align*}

Set
\begin{align*}
\tilde  e_a :=\gamma (e_a), \quad \tilde  {e}^b:=\bar \gamma^*(e^b),
\end{align*}
and set
\begin{align*}
S'&=\sum ( 1\otimes \tilde  e_{a})\otimes (e^{a}\otimes 1)\quad \in H(A)^{\op}\otimes H(A), 
\\
 S''&=\sum (1\otimes e_{a})\otimes ( \tilde  e^{a} \otimes 1)\quad \in H(A)\otimes H(A)^{\op},
\\
\tilde S&=\sum (1\otimes \tilde  e_{a})\otimes ( \tilde  e^{a} \otimes 1)\quad \in H(A)^{\op}\otimes H(A)^{\op}.
\end{align*}

Kashaev showed the following.

\begin{theorem}[Kashaev \cite{Kas97}]\label{RvsSth}
We have 
\begin{align}\label{ybe2}
\hat  R&=S_{14}''S_{13}\tilde S_{24} S_{23}' \quad \in \left(H(A)\otimes H(A)^{\op}\right)^{\otimes 2}.
\end{align}
\end{theorem}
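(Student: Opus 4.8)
The plan is to prove the identity by directly multiplying out the four tensors on the right-hand side and simplifying the result to the explicit expression for $\hat R=\phi^{\otimes 2}(R)$ displayed just before the theorem. First I would fix notation for the ambient algebra: label the four tensor factors of $\left(H(A)\otimes H(A)^{\op}\right)^{\otimes 2}$ by $1,2,3,4$, so that slots $1,3$ are copies of $H(A)$ and slots $2,4$ are copies of $H(A)^{\op}$. With this convention the subscripts on $S''_{14}, S_{13}, \tilde S_{24}, S'_{23}$ specify exactly which of the eight underlying tensorands of $(A^*\otimes A)^{\otimes 4}$ each factor occupies, and one checks that these placements are consistent with the respective domains $H(A)\otimes H(A)^{\op}$, $H(A)\otimes H(A)$, $H(A)^{\op}\otimes H(A)^{\op}$, $H(A)^{\op}\otimes H(A)$ of $S'', S, \tilde S, S'$.

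Next I would compute the product slot by slot, using that elements sitting in distinct tensor factors commute. In each slot exactly two of the four tensors contribute, and in every such pairwise product one factor has trivial $A$-part while the other has trivial $A^*$-part; consequently the Heisenberg-double multiplication (\ref{muhei}) degenerates to ordinary multiplication in $A$ or in $A^*$, with the order reversed in the opposite slots $2$ and $4$. Carrying this out, and writing $\tilde e_a=\gamma(e_a)$, $\tilde e^a=\bar\gamma^*(e^a)$, gives
\[
S''_{14} S_{13} \tilde S_{24} S'_{23}=\sum_{a,b,c,d}(1\otimes e_a e_b)\otimes(1\otimes \gamma(e_d)\gamma(e_c))\otimes(e^b e^d\otimes 1)\otimes(\bar\gamma^*(e^c)\bar\gamma^*(e^a)\otimes 1),
\]
a sum over four independent indices $a,b,c,d$.

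The heart of the proof is then to collapse these four indices to a single one. Here I would use the two standard identities for the canonical element $\sum_a e_a\otimes e^a$, namely $\sum_a e_a'\otimes e_a''\otimes e^a=\sum_{a,b}e_a\otimes e_b\otimes e^a e^b$ and its dual $\sum_a e_a\otimes (e^a)'\otimes (e^a)''=\sum_{a,b}e_a e_b\otimes e^a\otimes e^b$, together with the antihomomorphism properties of $\gamma$ and $\bar\gamma^*$ (so that $\gamma(e_d)\gamma(e_c)=\gamma(e_c e_d)$ and $\bar\gamma^*(e^c)\bar\gamma^*(e^a)=\bar\gamma^*(e^a e^c)$) and the fact that $\Delta$ is an algebra map. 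Applying the first identity to the product $e^b e^d$ in slot $3$ merges $b,d$ into one index, splitting it as a comultiplication across slots $1$ and $2$; applying it again to $e^a e^c$ inside slot $4$ merges $a,c$; the resulting $A$-products reassemble via $\Delta(xy)=\Delta(x)\Delta(y)$ into $\Delta(e_l e_k)$ across slots $1,2$. Finally the dual identity, applied to $e_l e_k$ now coupled with $e^l$ in slot $4$ and $e^k$ in slot $3$, merges the last two indices and produces precisely $\sum_m(1\otimes e_m')\otimes(1\otimes \gamma(e_m''))\otimes((e^m)''\otimes 1)\otimes(\bar\gamma^*((e^m)')\otimes 1)=\hat R$.

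The main obstacle I anticipate is bookkeeping rather than conceptual: one must keep scrupulous track of the reversed multiplication order in the two opposite slots, of the order-reversal introduced by the antipodes $\gamma,\bar\gamma^*$, and of which tensorand of each comultiplication lands in which slot when the canonical-element identities are invoked. Getting the order of the four successive contractions to dovetail with these reversals—so that the $A$-parts assemble into $\Delta(e_l e_k)$ and the $A^*$-parts into $\Delta_{A^*}(e^m)$ with the correct primes—is the delicate step; everything else reduces to the degeneration of (\ref{muhei}) noted above.
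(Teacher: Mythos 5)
Your computation is correct, but note that the paper itself gives \emph{no} proof of Theorem \ref{RvsSth}: it is quoted from Kashaev \cite{Kas97}, and the expansion $S_{14}''S_{13}\tilde S_{24}S_{23}'=\sum_{a,b,c,d}e_ae_b\otimes\tilde e_d\tilde e_c\otimes e^be^c\otimes\tilde e^a\tilde e^d$ only reappears, unproved, inside the proof of Lemma \ref{jj'l}. So your argument supplies a verification the paper omits, and it is the natural one: expand the four tensors slot by slot, then contract the four summation indices back to one using the two canonical-element identities $\sum_a e_a'\otimes e_a''\otimes e^a=\sum_{a,b}e_a\otimes e_b\otimes e^ae^b$ and $\sum_a e_a\otimes (e^a)'\otimes (e^a)''=\sum_{a,b}e_ae_b\otimes e^a\otimes e^b$, together with the antimultiplicativity of $\gamma$ and $\bar\gamma^*$ and $\Delta(xy)=\Delta(x)\Delta(y)$. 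I checked the bookkeeping: your displayed intermediate formula, including the order reversals in the opposite slots $2$ and $4$, is exactly right, and the three contractions do land on $\sum_m(1\otimes e_m')\otimes(1\otimes\gamma(e_m''))\otimes((e^m)''\otimes 1)\otimes(\bar\gamma^*((e^m)')\otimes 1)=\hat R$; your answer also agrees, after translating notational conventions, with the expansion used in Lemma \ref{jj'l}. One slip in your prose, though it does not affect the result: you justify the degeneration of the multiplication (\ref{muhei}) by saying that in each pairwise product ``one factor has trivial $A$-part while the other has trivial $A^*$-part.'' That is backwards, and a product literally of that shape, namely $(1\otimes x)(g\otimes 1)$ in $H(A)$, does \emph{not} degenerate --- it is the one genuinely noncommutative case $\sum g(?x')\otimes x''$. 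What actually happens is better: in slots $1$ and $2$ \emph{both} contributing factors have trivial $A^*$-part, and in slots $3$ and $4$ both have trivial $A$-part, so each slot reduces to a plain product in $A$ or in $A^*$ (reversed in the $H(A)^{\op}$ slots), which is exactly what your displayed formula says.
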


\begin{proposition}[{Kashaev \cite{Kas97}}]\label{ys}
The quantum Yang-Baxter equation (\ref{qy}) in $\left(H(A)\otimes H(A)^{\op}\right)^{\otimes 3}$ is a consequence of the following variations of the pentagon equation for the tensors $S, S', S''$ and $\tilde S$:
\begin{align}
&S_{23}S_{12}=S_{12}S_{13}S_{23},  \label{pe1}
\quad S_{23}S'_{12}=S'_{12}S'_{13}S_{23}, \\
&S''_{23}S_{12}=S_{12}S''_{13}S''_{23}, \quad S''_{23}S'_{12}=S'_{12}S''_{13}S''_{23}, 
\end{align}
and 
\begin{align}
&S'_{23}S_{13}S''_{12}=S''_{12}S'_{23}, \quad S'_{23}S'_{13}\tilde S_{12}=\tilde S_{12}S'_{23},
\\
&\tilde S_{23}S''_{13}S''_{12}=S''_{12}S''_{23}, \quad \tilde S_{23}\tilde S_{13}\tilde S_{12}=\tilde S_{12}\tilde S_{23}.\label{pe8}
\end{align}
\end{proposition}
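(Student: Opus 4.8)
The plan is to substitute the factorization (\ref{ybe2}) of $\hat R$ into both sides of the quantum Yang--Baxter equation (\ref{qy}) and then transform the left-hand side into the right-hand side using only two bookkeeping tools: the eight relations (\ref{pe1})--(\ref{pe8}), and the obvious fact that two of the tensors $S,S',S'',\tilde S$ commute whenever their index sets are disjoint, since they then act on different tensorands. Writing $H(A)$ in the odd slots $1,3,5$ and $H(A)^{\op}$ in the even slots $2,4,6$ of $\left(H(A)\otimes H(A)^{\op}\right)^{\otimes 3}$, formula (\ref{ybe2}) gives
\begin{align*}
\hat R_{12}=S''_{14}S_{13}\tilde S_{24}S'_{23}, \quad \hat R_{13}=S''_{16}S_{15}\tilde S_{26}S'_{25}, \quad \hat R_{23}=S''_{36}S_{35}\tilde S_{46}S'_{45},
\end{align*}
so that each side of (\ref{qy}) becomes a product of twelve elementary tensors, and the middle block $\hat R_{13}$ occupies the same position on both sides. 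The task thus reduces to braiding the block $\hat R_{12}$ (slots $1,2,3,4$) and the block $\hat R_{23}$ (slots $3,4,5,6$) past one another around $\hat R_{13}$.

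Next I would exploit the shape of the two families of relations. The first four of (\ref{pe1})--(\ref{pe8}) are \emph{expansions}: each rewrites a length-two product $X_{bc}Y_{ab}$ as a length-three product $Y_{ab}Z_{ac}W_{bc}$, moving the $ab$-factor leftward at the cost of creating one auxiliary factor supported on the slot-pair $ac$. The last four of (\ref{pe1})--(\ref{pe8}) are \emph{contractions}: each rewrites a length-three product $X_{bc}Y_{ac}Z_{ab}$ as a length-two product $Z_{ab}X_{bc}$, absorbing exactly such an auxiliary $ac$-factor. The strategy is therefore to alternate expansions and contractions so that every auxiliary factor produced by an expansion is later consumed by a contraction, using disjoint-support commutations in between to slide factors into the positions where the next relation becomes applicable.

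Concretely, I would carry out the reduction factor by factor, tracking each of the twelve tensors by its type ($S$, $S'$, $S''$ or $\tilde S$) and by its slot-pair, and applying at each step whichever of the eight relations acts on the current adjacent pair or triple. I expect to organize the computation by first using the expansions to push the factors of $\hat R_{23}$ leftward through $\hat R_{13}$ and $\hat R_{12}$, and then using the contractions to collapse the resulting auxiliary factors, arriving at the order $\hat R_{23}\hat R_{13}\hat R_{12}$.

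The hard part is purely combinatorial bookkeeping rather than any single deep identity: with twelve factors distributed over six slots and the three copies interleaved, the overlaps --- slots $3,4$ shared by $\hat R_{12}$ and $\hat R_{23}$, and slots $1,2$ and $5,6$ shared with $\hat R_{13}$ --- are precisely where non-commuting rearrangements are forced. The whole argument hinges on sequencing the relations so that at each stage exactly one expansion or contraction applies and so that the expansions and contractions pair up perfectly, leaving no stray auxiliary factor at the end; verifying that this chain closes up exactly, transporting the left-hand side term by term to the right-hand side, is the only real obstacle.
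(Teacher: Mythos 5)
Your setup is correct --- the six-slot rewriting of (\ref{qy}) via (\ref{ybe2}), and the classification of the eight relations into word-lengthening and word-shortening moves --- and a direct algebraic rewriting of this kind (essentially Kashaev's original argument) does prove the proposition. But your proposal stops exactly where the proof begins: the existence of a rewriting chain in which ``the expansions and contractions pair up perfectly'' \emph{is} the content of Proposition \ref{ys}, and you defer it as bookkeeping. Worse, the plan you sketch cannot be executed as stated. Every one of the eight relations exchanges the order of two factors whose slot pairs overlap in the pattern $(a,b)$--$(b,c)$, i.e.\ the shared slot is the \emph{second} slot of one factor and the \emph{first} slot of the other; the third factor involved, supported on $(a,c)$, is created or absorbed, never swapped. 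In the six-slot picture the only pairs of this type are (factor of $\hat R_{12}$, factor of $\hat R_{23}$), sharing slots $3,4$. The conflicts you propose to resolve first --- factors of $\hat R_{23}$ against factors of $\hat R_{13}$ --- overlap in slots $5,6$, which is the second slot of \emph{both} factors (and the $\hat R_{12}$-versus-$\hat R_{13}$ conflicts in slots $1,2$ are first against first); no relation in the list can interchange such a pair. So $\hat R_{13}$ cannot be treated as an inert middle block through which the factors of $\hat R_{23}$ are pushed while auxiliary factors come and go: there is no legal first move of that kind, since for instance nothing lets $S''_{36}$ cross $\tilde S_{26}$ with both factors surviving.

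The derivation that works uses the four factors of $\hat R_{13}$ themselves as the disposable factors. Starting from $\hat R_{12}\hat R_{13}\hat R_{23}$, one reads the first four of (\ref{pe1})--(\ref{pe8}) from right to left: each application swaps one factor of $\hat R_{12}$ past one factor of $\hat R_{23}$ while \emph{absorbing} one factor of $\hat R_{13}$ (this carries $S''_{36}$ and $S_{35}$ to the far left, consuming $S''_{16}$, $S_{15}$, $\tilde S_{26}$, $S'_{25}$); then one reads the last four from right to left: each application swaps one factor of $\hat R_{12}$ past one of $\hat R_{23}$ while \emph{recreating} one factor of $\hat R_{13}$ (this carries $\tilde S_{46}$ and $S'_{45}$ left and restores the middle block). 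Eight applications, one for each conflicting pair, plus disjoint-slot commutations, and the chain closes. Note that this is the mirror image of your plan: absorption comes first, creation second, and the ``auxiliary'' factors are not auxiliary at all --- they are $\hat R_{13}$. This is also exactly the paper's proof in disguise: Proposition \ref{ys} is deduced there from the diagrammatic realization in Figure \ref{fig:ReiIII}, which exhibits the Reidemeister III move as a composition of precisely eight colored Pachner $(2,3)$ moves; applying $J'$ (Theorem \ref{sakie}, Theorem \ref{invr}, Table \ref{ta}) translates that chain of diagrams, move by move, into the eight relation applications above. That figure is the explicit certificate of ``perfect pairing'' that your proposal is missing.
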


\section{Reconstruction of the universal quantum invariant}\label{j''}

Let $D(A)$ be the Drinfeld double of $A$. Recall from (\ref{u}) the element $u=\sum \gamma(\beta)\alpha=\sum \bar \gamma^*(e^a)\otimes e_a$ with $R=\sum \alpha \otimes \beta = \sum (1\otimes e_a)\otimes (e^a \otimes 1)$. We have a ribbon Hopf algebra 
$$D(A)^{\theta}=D(A)[\theta]/\left( \theta^2-u\gamma(u)\right)$$
with the ribbon element $\theta$ (e.g., \cite{Ka95}).

We also consider the algebra
$$\left(H(A)\otimes H(A)^{\op}\right)^{\bar \theta}=\left(H(A)\otimes H(A)^{\op}\right)[\bar \theta]/\left( \bar \theta^2-\phi (u\gamma(u))\right),$$
and extend $\phi$ to the map 
$$
\phi \co D(A)^{\theta} \to \left(H(A)\otimes H(A)^{\op}\right)^{\bar \theta}
$$
by $\phi (\theta)=\bar \theta$.

In this section, we define  tangle invariant $J''$ using $\left(H(A)\otimes H(A)^{\op}\right)^{\bar \theta}$, which turns out to be the image of tensor power of $\phi$ of the universal invariant associated to $D(A)^{\theta}$ (Theorem \ref{jj'}).

In what follows, for simplicity, we use the notation 
\begin{align*}
fx=f\otimes x\in A^*\otimes A,
\end{align*}
for $f\in A^*$ and $x\in A$. In particular we have
\begin{align*}
S=\sum_a e^a\otimes e_a, \quad S'=\sum_a \tilde e^a\otimes e_a,
\quad S''= \sum_a  e^a\otimes \tilde e_a, \quad \tilde S=\sum_a \tilde e^a\otimes \tilde e_a.
\end{align*}

\subsection{Reconstruction of the universal quantum invariant using the Heisenberg double}\label{reco}

Let $T=T_1\cup \cdots \cup T_n$ be an $n$-component, framed, ordered tangle.
Similarly to Section \ref{univinv}, set 
\begin{align*}
N_{\left(H\otimes H^{\op}\right)^{\bar \theta}}&=\mathrm{Span}_k\{ab-ba \ |\ a,b \in \left(H(A)\otimes H(A)^{\op}\right)^{\bar \theta}\}\subset \left(H(A)\otimes H(A)^{\op}\right)^{\bar \theta}.
\end{align*}
For $i=1,\ldots, n$, let 
\begin{align}\label{cont}
\left(H(A)\otimes H(A)^{\op}\right)^{\bar \theta}_i=
\begin{cases}
(H(A)\otimes H(A)^{\op})^{\bar \theta} \quad &\text{if} \quad \partial T_i \not=  \emptyset,
\\
\left(H(A)\otimes H(A)^{\op}\right)^{\bar \theta}/N_{\left(H(A)\otimes H(A)^{\op}\right)^{\bar \theta}} \quad &\text{if} \quad \partial T_i=  \emptyset.
\end{cases}
\end{align}

Take a diagram $D$ of $T$.
We define an element $J'(D)\in  \bigotimes_i\left(H(A)\otimes H(A)^{\op}\right)^{\bar \theta}_i$ modifying the definition of $J(T)$ as follows.

We duplicate $D$ and thicken the left strands following the orientation, and  denote the result by $\zeta(D)$.
See (a), (b) in Figure \ref{c} and Figure \ref{fig8} for examples.

Then we put labels on crossings as in Figure \ref{fig:color2}, where each $\gamma'$ and each $(\bar \gamma^*)'$  should be replaced with $\gamma$ and $\bar \gamma^*$, respectively, 
if the string is oriented upwards, and with the identities otherwise.

\begin{figure}
\centering
\includegraphics[width=8cm,clip]{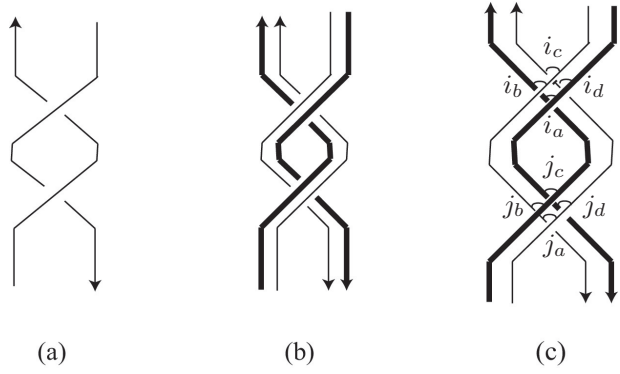}
\caption{(a) A tangle $C$, (b) The diagram $\zeta(C)$, (c) Parameters for $\zeta(C)$}\label{c}

\begin{picture}(0,0)
\put(86,133){$i_a$}
\put(71,148){$i_b$}
\put(86,163){$i_c$}
\put(101,148){$i_d$}

\put(86,88){$j_a$}
\put(71,103){$j_b$}
\put(86,118){$j_c$}
\put(101,103){$j_d$}
\end{picture}
\end{figure}

\begin{figure}
\centering
\includegraphics[width=10cm,clip]{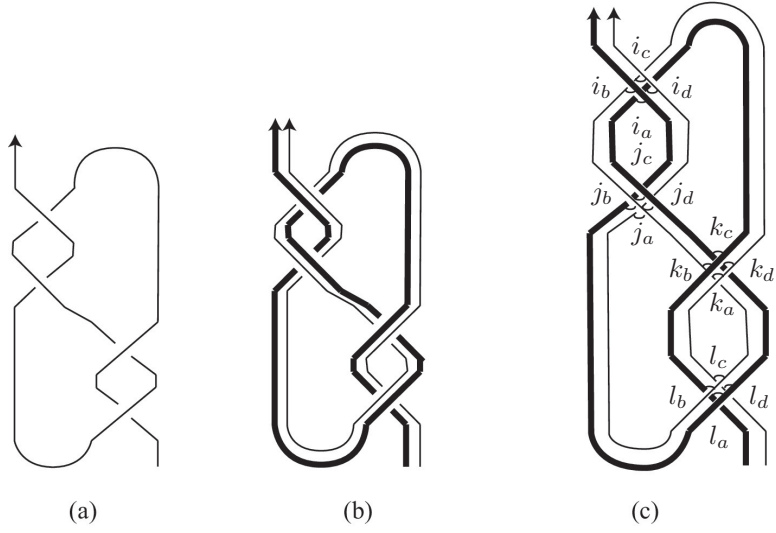}
\caption{(a) A tangle $T_{41}$, (b) The  diagram $\zeta(T_{41})$, (c) Parameters for $\zeta(T_{41})$ }\label{fig8}

\begin{picture}(0,0)
\put(91,192){$i_a$}
\put(76,207){$i_b$}
\put(91,222){$i_c$}
\put(106,207){$i_d$}

\put(91,152){$j_a$}
\put(76,167){$j_b$}
\put(91,182){$j_c$}
\put(106,167){$j_d$}

\put(120,125){$k_a$}
\put(105,138){$k_b$}
\put(120,154){$k_c$}
\put(135,138){$k_d$}

\put(120,75){$l_a$}
\put(105,90){$l_b$}
\put(120,105){$l_c$}
\put(135,90){$l_d$}
\end{picture}
\end{figure}

\begin{figure}
\includegraphics[width=11cm,clip]{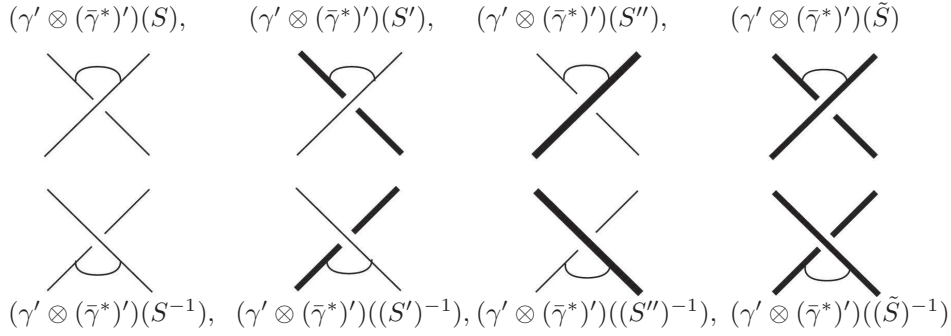}
\caption{Labels on crossings}\label{fig:color2}

\begin{picture}(0,0)
\put(-170,138){$(\gamma' \otimes (\bar \gamma^*)' )(S),$}
\put(-80,138){$(\gamma' \otimes (\bar \gamma^*)' )(S'),$}
\put(5,138){$(\gamma' \otimes (\bar \gamma^*)' )(S''),$}
\put(100,138){$(\gamma' \otimes (\bar \gamma^*)' )(\tilde S)$}

\put(-170,27){$(\gamma' \otimes (\bar \gamma^*)' )(S^{-1}),$}
\put(-85,27){$(\gamma' \otimes (\bar \gamma^*)' )((S')^{-1}),$}
\put(5,27){$(\gamma' \otimes (\bar \gamma^*)' )((S'')^{-1}),$}
\put(100,27){$(\gamma' \otimes (\bar \gamma^*)' )((\tilde S)^{-1})$}
\end{picture}
\end{figure}

We define the $(2i-1)$st and the $2i$th tensorands  of $J'(D)$  as the product 
of the labels on the thin and the thick strands, respectively, obtained by duplicating  $T_i$, where the labels are read off 
reversing the orientation, and  written from left to right. Here, if $T_i$ is a closed component, then we choose a point $p$ on $T_i$ and 
denote by $p'$ (resp. $p''$) the  image of $p$ by the duplicating procedure on the thin (resp. thick)  strand.
We read the labels of the thin (resp. thick) strand from  $p'$ (resp. $p''$).

Let 
\begin{align}\label{diagleft}
(\leftarrow)\co \text{\{tangle diagrams\}} \to \text{\{tangle diagrams\}}, \quad D \mapsto D_{(\leftarrow)},
\end{align}
where $D_{(\leftarrow)}$ is the diagram
obtained from $D$ by replacing each of \includegraphics[width=0.5cm,clip]{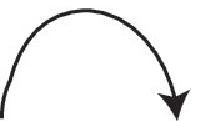} and \includegraphics[width=0.5cm,clip]{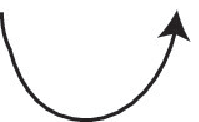} with \includegraphics[width=0.5cm,clip]{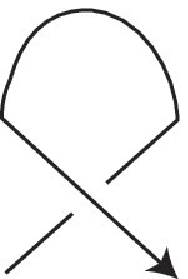} and  \includegraphics[width=0.5cm,clip]{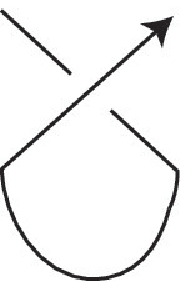}, respectively.

For $i=1,\ldots, n$, let $D_i$ be the subdiagram of $D$ corresponding to $T_i$. We define $d(D_i)$ as the number of \includegraphics[width=0.5cm,clip]{max.eps}  minus the number of \includegraphics[width=0.5cm,clip]{min.eps}  in $D_i$.

Set
\begin{align*}
J''(D)=\left(\prod_i \bar \theta_i^{d(D_i)}\right)J'(D_{(\leftarrow)}) \in \bigotimes_i \left(H(A)\otimes H(A)^{\op}\right)^{\bar \theta}_i,
\end{align*}
where $\bar \theta_i$ is defined following the notation (\ref{subs}).

 \begin{theorem}\label{jj'}
We have 
 \begin{align*}
 \phi^{\otimes n}\circ J(T)&=J''(D) \quad \in \bigotimes_i \left(H(A)\otimes H(A)^{\op}\right)^{\bar \theta}_i.
 \end{align*}
   
  \end{theorem}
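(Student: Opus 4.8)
The plan is to use that $\phi$, extended to an algebra homomorphism $D(A)^\theta \to (H(A)\otimes H(A)^{\op})^{\bar\theta}$ by $\phi(\theta)=\bar\theta$, commutes with the ``read off the product of labels'' procedure that defines both invariants. For each component $T_i$, the $i$th tensorand of $J(T)=J(D)$ is a product in $D(A)^\theta$ of the labels encountered while traversing $T_i$ (the $R^{\pm1}$-pieces from crossings and the $u\theta^{\pm1}$-pieces from right-oriented maxima and minima). Since $\phi$ is multiplicative, $\phi$ of this tensorand is the product of the $\phi$-images of those labels, taken in the same order; equivalently, $\phi^{\otimes n}(J(D))$ is computed by the very same diagrammatic rule, with each crossing label replaced by its image under $\phi^{\otimes2}$ and each max/min label by its image under $\phi$. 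It therefore suffices to verify, fundamental tangle by fundamental tangle, that these images agree with the $S$-variant labels prescribed for $J''(D)$, read in the order in which the thin and thick strands of $\zeta(D_{(\leftarrow)})$ are traversed.

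For crossings this is precisely Theorem \ref{RvsSth}. The doubling $\zeta$ replaces each strand by a thin/thick pair, so a single crossing of $D$ becomes the $2\times2$ array of four sub-crossings in $\zeta(D)$, and the four tensorands of $\left(H(A)\otimes H(A)^{\op}\right)^{\otimes2}$ correspond to (thin, thick) of the overstrand and (thin, thick) of the understrand, with the thin strands carrying the $H(A)$-factors (indices $1,3$) and the thick strands the $H(A)^{\op}$-factors (indices $2,4$). Under this identification the decomposition $\hat R=S_{14}''S_{13}\tilde S_{24}S_{23}'$ of (\ref{ybe2}) places one of $S,S',S'',\tilde S$ on each sub-crossing, matching the top row of Figure \ref{fig:color2}; the negative crossing is handled by applying $\phi^{\otimes2}$ to $R^{-1}$, producing $\hat R^{-1}$ and hence the inverse $S$-variant labels of the bottom row. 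The orientation decorations should match because $\phi$ intertwines the Drinfeld-double antipode $\gamma_{D(A)}$ with the pair $(\gamma_A,\bar\gamma^*)$ acting on the $H(A)$- and $H(A)^{\op}$-factors, which is exactly the replacement $\gamma'\mapsto\gamma$, $(\bar\gamma^*)'\mapsto\bar\gamma^*$ prescribed for upward strands.

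It remains to match the maxima, minima and ribbon contributions. The passage $D\mapsto D_{(\leftarrow)}$ turns every cap and cup into its left-oriented form, which carries no label, so that $J'(D_{(\leftarrow)})$ records only the (images of the) crossing labels while leaving the crossings themselves---and hence the crossing computation above---unchanged. The labels $u\theta^{-1}$ and $u^{-1}\theta$ attached by $J$ to the right-oriented maxima and minima are instead gathered, using $\phi(\theta)=\bar\theta$ and the explicit form of $u$, into the scalar prefactor $\prod_i\bar\theta_i^{d(D_i)}$, where $d(D_i)$ is the number of such maxima minus the number of such minima in $D_i$. Combining the crossing contribution with this prefactor gives $\phi^{\otimes n}(J(D))=\left(\prod_i\bar\theta_i^{d(D_i)}\right)J'(D_{(\leftarrow)})=J''(D)$.

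I expect the main obstacle to be the bookkeeping in the crossing step rather than any new algebraic identity: the pentagon content is already packaged in Theorem \ref{RvsSth}. One must check that the geometric order in which the four sub-crossings are met while traversing the thin and thick strands of $\zeta(D_{(\leftarrow)})$ agrees with the algebraic order of the factors $S_{14}'',S_{13},\tilde S_{24},S_{23}'$, that this holds uniformly for all orientation patterns of the original crossing, and that the intertwining of $\phi$ with the antipodes distributes the $\gamma'$ and $(\bar\gamma^*)'$ decorations correctly between the thin and thick strands. Once these local matchings are secured, the remaining max/min and $\theta$ accounting is a routine consequence of $\phi$ being a homomorphism with $\phi(\theta)=\bar\theta$.
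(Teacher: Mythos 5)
Your reduction to local labels via multiplicativity of $\phi$ and your treatment of crossings are fine, and agree with the paper's own argument (part (1) of the proof of Lemma \ref{jj'l}, which rests on Theorem \ref{RvsSth} together with explicit antipode computations for the non-downward orientations). The genuine gap is in the cap/cup step, which you declare routine. The labels $u\theta^{-1}$ and $u^{-1}\theta$ that $J$ attaches to the oriented maxima and minima of Figure \ref{fig:cross} are not powers of the ribbon element: $u$ is not central (conjugation by $u$ is $\gamma^{2}$), and $\phi(u)$ is likewise non-central in $H(A)\otimes H(A)^{\op}$ and is not a power of $\bar\theta$. So these labels cannot be ``gathered into the scalar prefactor $\prod_i\bar\theta_i^{d(D_i)}$''; only their central $\theta^{\pm1}$ parts can be pulled to the front, while the $u^{\mp1}$ parts remain at their positions in the ordered product. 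Relatedly, your claim that $D\mapsto D_{(\leftarrow)}$ leaves the crossings unchanged is false: replacing a labeled cap or cup by its unlabeled left-oriented form inserts a kink, so $D_{(\leftarrow)}$ has one extra self-crossing for each labeled cap/cup of $D$. This is forced by the writhe identity $f((D_{(\leftarrow)})_i)-f(D_i)=d(D_i)$, which is exactly what the paper uses in the last line of its proof; if the crossings were unchanged, that identity would read $d(D_i)=0$. Consequently $J'(D_{(\leftarrow)})$ contains $S$-tensor labels at these kinks that your matching never accounts for --- and they are precisely what reproduces the non-central factors $\phi(u^{\mp1})$ that your accounting discards.

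The paper avoids manipulating the cap/cup labels altogether: it first trades $T$ for its $0$-framed version via $J(T)=\left(\prod_i\theta_i^{-f(D_i)}\right)J(T^0)$, then (Lemma \ref{jj'l}) computes $J(T^0)$ from the diagram $D^0$ obtained from $D_{(\leftarrow)}$ by inserting compensating kinks, so that every cap/cup is unlabeled and only crossings contribute; the kink identity $J(c^{0}_{\pm})=\theta^{\pm1}_{1}J(c_{\pm})$ together with $\phi(\theta)=\bar\theta$ yields $\phi^{\otimes n}\circ J(T^{0})=J^{0}(D_{(\leftarrow)})$, and the theorem follows from the writhe count above. To repair your argument along your own lines you would have to prove the local statement that $\phi(u^{\mp1}\theta^{\pm1})$ equals $\bar\theta^{\pm1}$ times the product of the $S$-variant labels of a kink of $\zeta(D_{(\leftarrow)})$ read through its unlabeled cap and cup; that is the actual content of this step --- equivalent to what the paper's lemma establishes --- and not bookkeeping that follows formally from $\phi$ being a homomorphism.
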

  
If moreover  $T$ is a braid, which is a $0$-framed tangle with no maxima or minima, then we have $D=D_{(\leftarrow)}$, $\prod_i \bar \theta_i^{d(D_i)}=1$. Thus we have the following.

  \begin{corollary}
If  $T$ is a braid, then we have 
  \begin{align*}
\phi^{\otimes n}\circ J(T)= J'(D) \quad \in \bigotimes_i \left(H(A)\otimes H(A)^{\op}\right)_i,
  \end{align*}
  where $ \left(H(A)\otimes H(A)^{\op}\right)_i$ is defined similarly as (\ref{cont}) using $H(A)\otimes H(A)^{\op}$
  instead of $\left(H(A)\otimes H(A)^{\op}\right)^{\bar \theta}$.
  \end{corollary}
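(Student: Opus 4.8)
The plan is to derive this corollary directly from Theorem \ref{jj'}, which already gives $\phi^{\otimes n}\circ J(T) = J''(D) = \left(\prod_i \bar\theta_i^{d(D_i)}\right) J'(D_{(\leftarrow)})$; the only work is to check that both the framing correction $\prod_i \bar\theta_i^{d(D_i)}$ and the extrema-fixing operation $(\leftarrow)$ become trivial under the braid hypothesis. Since a braid is by definition a $0$-framed tangle whose strands have no local maxima or minima, I would first observe that the chosen diagram $D$ contains no local extremum of either orientation type. The operation $(\leftarrow)$ of (\ref{diagleft}) only alters the right-to-left extrema, replacing them with left-to-right ones; as there are no extrema at all, it acts as the identity, so $D_{(\leftarrow)} = D$ and hence $J'(D_{(\leftarrow)}) = J'(D)$. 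Likewise $d(D_i)$ is the signed count of maxima minus minima in the $i$th component, which is $0 - 0 = 0$, whence $\bar\theta_i^{d(D_i)} = \bar\theta_i^{0} = 1$ for every $i$ and the prefactor $\prod_i \bar\theta_i^{d(D_i)}$ equals $1$. Combining these gives $J''(D) = J'(D)$, and substituting into Theorem \ref{jj'} yields $\phi^{\otimes n}\circ J(T) = J'(D)$.

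The remaining point is the change of target algebra: Theorem \ref{jj'} places the identity in $\bigotimes_i \left(H(A)\otimes H(A)^{\op}\right)^{\bar\theta}_i$, whereas the corollary asserts it in the smaller $\bigotimes_i \left(H(A)\otimes H(A)^{\op}\right)_i$. Here I would use that the algebra $\left(H(A)\otimes H(A)^{\op}\right)^{\bar\theta}$ is free over $H(A)\otimes H(A)^{\op}$ on $\{1,\bar\theta\}$, so there is a natural inclusion of $H(A)\otimes H(A)^{\op}$ as the $\bar\theta^{0}$-component. By construction $J'(D)$ is read off tensorand-wise as a product of the crossing labels of Figure \ref{fig:color2}, each of which is a variant of the $S$-tensor or its inverse and hence already lies in $H(A)\otimes H(A)^{\op}$; no $\bar\theta$ factor ever enters $J'(D)$, in contrast to $J''(D)$, precisely because the $\bar\theta$ powers appear only through the now-trivial prefactor. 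Thus each tensorand of $J'(D)$ lies in the image of the inclusion, so $J'(D)$ is the image of a genuine element of $\bigotimes_i \left(H(A)\otimes H(A)^{\op}\right)_i$.

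Finally, I would note that for a braid every component $T_i$ has nonempty boundary, so by (\ref{cont}) both $\left(H(A)\otimes H(A)^{\op}\right)^{\bar\theta}_i$ and $\left(H(A)\otimes H(A)^{\op}\right)_i$ are the full algebras with no commutator quotient, and the $\bar\theta^{0}$-inclusion is compatible with the tensor product over $i$. Hence the equality $\phi^{\otimes n}\circ J(T) = J'(D)$ holds already in $\bigotimes_i \left(H(A)\otimes H(A)^{\op}\right)_i$, as claimed. I expect no genuine obstacle here beyond this bookkeeping: the entire analytic content is inherited from Theorem \ref{jj'}, and the braid hypothesis is exactly what kills both the framing correction $\bar\theta$ and the extrema-fixing map $(\leftarrow)$, leaving only the crossing contributions that live natively in $H(A)\otimes H(A)^{\op}$.
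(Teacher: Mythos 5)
Your core argument is exactly the paper's: the paper derives this corollary in one sentence from Theorem \ref{jj'}, observing that a braid has no maxima or minima, so that $D_{(\leftarrow)}=D$ and $\prod_i\bar\theta_i^{d(D_i)}=1$, hence $J''(D)=J'(D)$.

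The one place where you go beyond the paper is the descent from $\bigotimes_i\left(H(A)\otimes H(A)^{\op}\right)^{\bar\theta}_i$ to $\bigotimes_i\left(H(A)\otimes H(A)^{\op}\right)_i$, and there your justification is the weak point. Freeness of $\left(H(A)\otimes H(A)^{\op}\right)^{\bar\theta}$ over $H(A)\otimes H(A)^{\op}$ on $\{1,\bar\theta\}$ is not automatic: $\bar\theta$ is adjoined as a \emph{central} variable, so the defining ideal contains every commutator $[\phi(u\gamma(u)),x]$ with $x\in H(A)\otimes H(A)^{\op}$, and $\phi(u\gamma(u))$ is only known to commute with the image $\phi(D(A))$, not with the whole algebra (indeed $H(A)\cong\mathrm{End}_k(A)$, so the center of $H(A)\otimes H(A)^{\op}$ is just $k$). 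Thus the injectivity of the natural map $H(A)\otimes H(A)^{\op}\to\left(H(A)\otimes H(A)^{\op}\right)^{\bar\theta}$, which is precisely what your descent argument requires, is unproven; the paper silently glosses over the same point. The robust repair is to bypass the $\bar\theta$-extension entirely: for a braid every label occurring in $J(T)$ is of the form $(\gamma'\otimes\gamma')(R^{\pm1})$ and every label in $J'(D)$ is a variant of the $S$-tensor, so case (1) in the proof of Lemma \ref{jj'l}, namely $\phi^{\otimes2}\circ J(c_{\pm})=\hat R^{\pm1}=J'(c_{\pm})$ via Theorem \ref{RvsSth}, is an identity holding inside $\left(H(A)\otimes H(A)^{\op}\right)^{\otimes2}$, and multiplying the labels along the strands then gives the corollary directly in $\bigotimes_i\left(H(A)\otimes H(A)^{\op}\right)_i$, with $\bar\theta$ never appearing. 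Your final observation, that every component of a braid has nonempty boundary so no commutator quotients intervene, is correct and needed.
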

 
  Let $
  f(D_i)=\#\text{\{positive self crossings of $D_i$\}}-\#\text{\{negative self crossings of $D_i$\}}$
 be the framing  of $D_i$.
Set $$
J^0(D)=\left(\prod_i \bar \theta_i^{f(D_i)}\right) J'(D) \in \bigotimes_i \left(H(A)\otimes H(A)^{\op}\right)^{\bar \theta}_i.
$$

We use the following lemma to prove Theorem \ref{jj'}.

\begin{lemma}\label{jj'l}
 Let $T$ be an $n$-component framed tangle, and let $T^0$ denote $T$ with $0$-framing.
 Let $D$ be a diagram of $T$.
 We have 
\begin{align*}
\phi^{\otimes n}\circ J(T^0) = J^0(D_{(\leftarrow)})\quad \in \bigotimes_i \left(H(A)\otimes H(A)^{\op}\right)^{\bar \theta}_i.
 \end{align*}
 \end{lemma}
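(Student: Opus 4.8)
The plan is to reduce the identity $\phi^{\otimes n}\circ J(T^0)=J^0(D_{(\leftarrow)})$ to the crossing-by-crossing correspondence that is already packaged in Theorems \ref{phi} and \ref{RvsSth}. The key conceptual point is that $J(T^0)$ is built, component by component, as an ordered product of the labels $(\gamma'\otimes\gamma')(R^{\pm 1})$ sitting at the crossings (together with trivial labels at maxima/minima, since we use $D_{(\leftarrow)}$ to push all cups/caps into the right-to-left form that carries no label). Applying $\phi^{\otimes n}$ and using that $\phi$ is an algebra homomorphism, the image of each such product is the product of the images $\phi\bigl((\gamma'\otimes\gamma')(\alpha)\bigr)$ and $\phi\bigl((\gamma'\otimes\gamma')(\beta)\bigr)$ of the individual $R$-matrix tensorands. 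So the whole statement will follow once I check that, crossing by crossing, applying $\phi$ to the two legs of a copy of $R$ reproduces exactly the four $S$-tensor labels assigned in Figure \ref{fig:color2} to the thin/thick duplicated strands of $\zeta(D_{(\leftarrow)})$.

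First I would make precise the bookkeeping. Under $\phi$, a single strand of the original diagram splits into a thin strand (the first $H(A)$ factor) and a thick strand (the second $H(A)^{\op}$ factor), which is exactly the doubling $\zeta$. The reading convention for $J'(D_{(\leftarrow)})$ — read labels along each strand against the orientation, thin strand giving the $(2i-1)$st tensorand and thick strand the $2i$th — is designed precisely so that the ordered product of the $\phi$-images of the $R$-labels along $T_i^0$ factors as (product along thin strand)$\,\otimes\,$(product along thick strand). I would verify that the $\op$ on the second Heisenberg factor is compatible with this: reading the thick strand against orientation in $H(A)^{\op}$ matches reading it with orientation in $H(A)$, which is why the duplicated picture reads correctly. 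Then I would invoke Theorem \ref{RvsSth}, $\hat R=S_{14}''S_{13}\tilde S_{24}S_{23}'$, to confirm that the four factors $S,S',S'',\tilde S$ are distributed onto the four strand-pairs of the crossing exactly as drawn in Figure \ref{fig:color2}, with the $\gamma'$ and $(\bar\gamma^*)'$ decorations accounting for upward versus downward orientation just as $\gamma'$ does in $(\gamma'\otimes\gamma')(R)$.

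Finally I would handle the ribbon/framing factor. Since $T^0$ carries $0$-framing, the definition of $J^0$ inserts $\prod_i\bar\theta_i^{f(D_i)}$ precisely to cancel the writhe contribution, and on the $D(A)^\theta$ side the $0$-framing normalization inserts $\prod_i\theta_i^{-f(D_i)}$ (implicitly, via the curl/framing labels $u^{-1}\theta$ and $u\theta^{-1}$ of Figure \ref{fig:cross}). Because $\phi$ was extended by $\phi(\theta)=\bar\theta$, these factors match under $\phi^{\otimes n}$, and the quotient by $N_{(H\otimes H^{\op})^{\bar\theta}}$ for closed components is the image under $\phi$ of the corresponding quotient by $N$, so the statement descends correctly to closed components. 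The main obstacle, and the step I expect to require the most care, is the orientation/reading bookkeeping of the preceding paragraph: one must check that the placement of $S,S',S'',\tilde S$ on the thin and thick strands of both over- and under-strands at a crossing, including the $\op$-twist on the second factor and the directions in which labels are read off, is globally consistent so that the local identity from Theorem \ref{RvsSth} assembles into the claimed global equality. Once that combinatorial compatibility is established, the theorem follows from the homomorphism property of $\phi^{\otimes n}$.
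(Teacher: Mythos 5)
Your proposal is correct and follows essentially the same route as the paper: the paper likewise uses that $D_{(\leftarrow)}$ carries labels only at crossings to reduce everything to the local identities $\phi^{\otimes 2}\circ J(c_{\pm})=J'(c_{\pm})$, which it proves by expanding $\hat R=S_{14}''S_{13}\tilde S_{24} S_{23}'$ (Theorem \ref{RvsSth}) and its inverse, with antipode-twisted formulas handling the other strand orientations. The only difference is the packaging of the framing factor: the paper inserts a kink at each self-crossing of $D_{(\leftarrow)}$ to produce an honest diagram $D^0$ of $T^0$ and checks the local identity $\phi^{\otimes 2}\circ J(c_{\pm}^0)=\bar\theta_1^{\pm 1}J'(c_{\pm})$, whereas you invoke the global framing-change property of $J$ together with $\phi(\theta)=\bar\theta$ --- the same mechanism, stated abstractly rather than diagrammatically.
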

 \begin{proof}
 For a positive (resp. negative) crossing $c=c_1\cup c_2$, where $c_1$ is the under strand, let $c^0$ be a tangle obtained by inserting a negative (resp. positive) kink to the bottom of $c_1$, see    Figure \ref{fig:normalcross} for examples. 
We take a diagram $D^0$ of $T^0$ obtained from $D_{(\leftarrow)}$ by replacing each self crossing $c$ by $c^0$ so that framings vanish.  

\begin{figure}
\centering
\begin{picture}(0,50)
\put(-110,0){\includegraphics[width=8cm,clip]{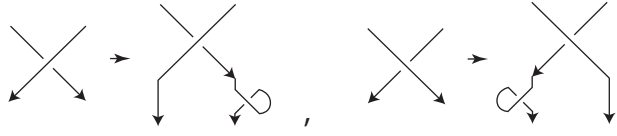}}
\end{picture}
\caption{How to insert a kink to a crossing}\label{fig:normalcross}

\end{figure}

Since the labels on  $D_{(\leftarrow)}$ to define $J'$ are only on crossings (since there are no \includegraphics[width=0.5cm,clip]{max.eps} and \includegraphics[width=0.5cm,clip]{min.eps}), 
in order to prove the assertion it is enough to show
\begin{enumerate}
\item $\phi^{\otimes 2}\circ J(c_{\pm})=J'(c_{\pm})$,
\item $\phi^{\otimes 2}\circ J(c^0_{\pm})=\bar \theta^{\pm 1}_1 J'(c_{\pm})$,
\end{enumerate}
for positive (resp. negative) crossing $c_+$ (resp. $c_-$) with each strand oriented arbitrarily.

Assume that each strand of $c_{\pm}$ is oriented downwards.
 
(1) follows from
\begin{align*}
\phi^{\otimes 2}\circ J(c_{+})&=\hat R_{1234}=S_{14}''S_{13}\tilde S_{24} S_{23}'
\\&=\sum_{a,b,c,d}e_{a}e_{b} \otimes \tilde  e_{d} \tilde  e_{c} \otimes e^{b}  e^{c}\otimes  \tilde  e^{a} \tilde  e^{d}=J'(c_{+}),
\\
\phi^{\otimes 2}\circ J(c_{-})&=\hat R^{-1}_{1234}=(S_{23}')^{-1}(\tilde S_{24} )^{-1}(S_{13})^{-1}(S_{14}'')^{-1}
\\
&=\sum_{a,b,c,d} u_{b} u_{c}\otimes\tilde  u_{a}\tilde  u_{d}
 \otimes  u^{a} u^{b}\otimes
 \tilde  u^{d}\tilde  u^{c}=J'(c_{-}),
\end{align*}
where $u_a$, $u^a$, $\tilde u_a$, $\tilde u^a$ are defined by 
 \begin{align*}
\sum_a u_a\otimes u^a=S^{-1}=\sum_a\gamma (e_a)\otimes e^a,\quad \sum_a \tilde  u_a\otimes u^a= (S')^{-1}=\sum_a\gamma (\tilde  e_a)\otimes e^a,
\\
\sum_a u_a\otimes \tilde  u^a=(S'')^{-1}=\sum_a\gamma (e_a)\otimes \tilde  e^a, 
\quad \sum_a \tilde  u_a\otimes  \tilde  u^a=\tilde S ^{-1}=\sum_a\gamma (\tilde  e_a)\otimes \tilde  e^a,
\end{align*}
see Figure \ref{fig:crossSbc}.

\begin{figure}
\includegraphics[width=5cm,clip]{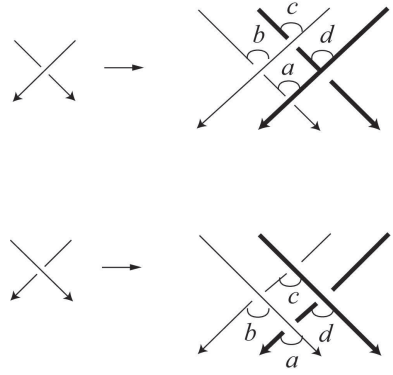}
\caption{Labels on the colored diagrams $\zeta(c_{\pm})$ associated to  positive and negative crossings $c_{\pm}$}\label{fig:crossSbc}
\end{figure}

Since the universal invariant of a positive (resp. negative) kink is equal to $\theta^{-1}$ (resp. $\theta$), we have  $J(c_{\pm}^0)=\theta^{\pm 1}_1 J(c_{\pm})$. Thus (2) follows from 
\begin{align*}
\phi^{\otimes 2}\circ J(c_{\pm}^0)&=\phi^{\otimes 2}\left(\theta^{\pm 1}_1 J(c_{\pm})\right)
\\
&=\bar \theta^{\pm 1}_1(\phi^{\otimes 2}\circ J)(c_{\pm})
\\
&=\bar \theta^{\pm 1}_1 J'(c_{\pm}),
\end{align*}
where the last identity follows from (1). 

For a crossing $c_{\pm}$ with other orientations,  (1) and (2) follow similarly from
\begin{align*}
\phi^{\otimes 2}\circ (\gamma_{D(A)} \otimes 1)(R)&=\sum_{a,b,c,d} \gamma(\tilde  e_{c})\gamma (\tilde  e_{d}) \otimes  \gamma(e_{b})  \gamma(e_{a})\otimes e^{b}  e^{c}\otimes  \tilde  e^{a}\tilde  e^{d},
\\
\phi^{\otimes 2}\circ (1\otimes \gamma_{D(A)} )(R)&=\sum_{a,b,c,d}e_{a}e_{b} \otimes \tilde  e_{d} \tilde  e_{c} \otimes \bar \gamma^* ( \tilde  e^{d}) \bar \gamma^* (\tilde  e^{a})\otimes  \bar \gamma^* (e^{c}) \bar \gamma^* (e^{b}) ,
\\
\phi^{\otimes 2}\circ (\gamma_{D(A)} \otimes 1)(R^{-1})&=\sum_{a,b,c,d} \gamma(\tilde  u_{d}) \gamma( \tilde  u_{a})\otimes  \gamma(u_{c})\gamma(u_{b}) \otimes  u^{a}u^{b}\otimes \tilde  u^{d}\tilde  u^{c},
\\
\phi^{\otimes 2}\circ (1\otimes \gamma_{D(A)})(R^{-1})&= \sum_{a,b,c,d}u_{b}u_{d}\otimes \tilde  u_{a}\tilde  u_{d} \otimes   \bar \gamma^* ( \tilde  u^{c}) \bar\gamma^* (\tilde  u^{d}) \otimes \bar \gamma^* (u^{b}) \bar \gamma^* (u^{a}),
\end{align*}
which completes the proof.

\end{proof}

\begin{proof}[Proof of Theorem \ref{jj'}]

By Lemma \ref{jj'l} we have
  \begin{align*}
 \phi^{\otimes n}\circ J(T)&= \phi^{\otimes n}\left( \left(\prod_i  \theta_i^{-f(D_i)}\right)  J(T^0)\right)
 \\&= \left(\prod_i  \bar \theta_i^{-f(D_i)}\right) \left( \phi^{\otimes n} \circ J(T^0)\right)
\\&= \left(\prod_i  \bar \theta_i^{-f(D_i)}\right) J^0(D_{(\leftarrow)})
\\&=\left(\prod_i \bar  \theta_i^{-f(D_i)}\right) \left(\prod_i  \bar\theta_i^{f((D_{(\leftarrow)})_i)}\right) J'(D_{(\leftarrow)})
\\&=\left(\prod_i \bar \theta_i^{d(D_i)}\right)J'(D_{(\leftarrow)}).
 \end{align*}

Thus we have the assertion.

\end{proof}

For the example with $C$, with the parameters as in Figure \ref{c} (c), we have
\begin{align*}
J'_{C}=\sum_{i_a,i_b,i_c,i_d,j_a,j_b,j_c,j_d} &\gamma(  e_{i_c})\gamma( e_{i_d})\bar \gamma^*( e^{j_d}) \bar \gamma^*(  e^{j_a})\otimes\gamma(\tilde e_{i_b})\gamma(\tilde e_{i_a}) \bar \gamma^*(\tilde e^{j_c})\bar \gamma^*(\tilde e^{j_b})
\\
&\otimes e_{j_a}e_{j_b}e^{i_b}e^{i_c}\otimes \tilde  e_{j_d}\tilde  e_{j_c}\tilde  e^{i_a}\tilde  e^{i_d}.
\end{align*}
For the example with $T_{41}$, with the parameters as in Figure \ref{fig8} (c), we have
\begin{align*}
J'_{{T_{41}}}=&\sum_{i_a,i_b,i_c,i_d,j_a,j_b,j_c,j_d, k_a, k_b,k_c,k_d, l_a,l_b,l_c,l_d}\bar \gamma^*(u^{i_c})\bar \gamma^*(u^{i_d})\gamma(u_{j_d})\gamma(u_{j_a})e^{l_b}e^{l_c}
e_{k_a}e_{k_b}
\\
&\times u^{j_a}u^{j_b}u_{i_b}u_{i_c}\bar \gamma^*(e^{k_d})\bar \gamma^*(e^{k_a})\gamma( e_{l_c})\gamma(e_{l_d})
\\
&\otimes 
\bar \gamma^*(\tilde  u^{i_b})\bar \gamma^*(\tilde  u^{i_a})\gamma(\tilde  u_{j_c})\gamma(\tilde  u_{j_b})
 \tilde e^{l_a} \tilde e^{l_d}\tilde e_{k_d} \tilde e_{k_c}
\\
&\times\tilde  u^{j_d}\tilde   u^{j_c}\tilde   u_{i_a}\tilde   u_{i_d}\bar \gamma^*(\tilde  e^{k_c})\bar \gamma^*(\tilde  e^{k_b})\gamma (\tilde  e_{l_b})\gamma(\tilde  e_{l_a}).
\end{align*}

\section{Extension of the universal quantum invariant to an invariant for colored diagrams}\label{exte}

In this section we define \textit{colored diagrams} and extend the map $J'$ to an invariant for colored diagrams up to  \textit{colored moves}.

\subsection{Colored diagrams and an extension of $J'$.}\label{coloredd}

In what follows, we consider also a virtual crossing as in Figure \ref{generator0}, which we call a \textit{symmetry}.
By a \textit{crossing} we mean only a real crossing.

A \textit{colored diagram} $Z$ is a virtual tangle diagram consisting of \textit{thin} strands  and \textit{thick} strands, which is obtained by pasting, horizontally and vertically, copies of fundamental tangle diagrams in Figure \ref{fig:fundamental} and copies of the symmetry, where the thickness of each strand are arbitrary.

\begin{figure}
\includegraphics[width=1.5cm,clip]{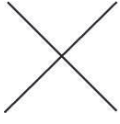}
\caption{A symmetry, where the orientation  of each strand is arbitrary}\label{generator0}
\end{figure}

Let $\mathcal{CD}$ be the set of colored diagrams.
For $\mu=(\mu_1\ldots, \mu_n), \nu=(\nu_1, \ldots, \nu_n) \in \{\pm\}^n$, we denote by
\begin{align*}
\mathcal{CD}(\mu; \nu) \subset \mathcal{CD}
\end{align*}
the set of $n$-component colored diagrams $Z=Z_1\cup \cdots \cup Z_n$ such that 
\begin{align*}
Z_i \text{ is thin } \Leftrightarrow \mu_i=+,& \quad Z_i \text{ is thick } \Leftrightarrow  \mu_i=-,
\\
\partial Z_i \not= \emptyset \Leftrightarrow \nu_i=+,& \quad \partial Z_i = \emptyset \Leftrightarrow \nu_i=-.
\end{align*}

For $i=1,\ldots,n$, set
\begin{align*}
H(A)_i^{+}&=H(A),
\quad 
H(A)_i^{-}=H(A)/N_{H(A)},
\\
 (H(A)^{\op})_i^{+}&=
H(A)^{\op}, \quad
 (H(A)^{\op})_i^{-}=
H(A)^{\op}/N_{H(A)^{\op}}.
\end{align*}

We define the map 
\begin{align*}
J'\co \mathcal{CD}(\mu; \nu)  \to  \bigotimes_{\mu_i=+} H(A)_i^{\nu_i} \bigotimes_{\mu_j=-} (H(A)^{\op})_j^{\nu_j}
\end{align*}
 in a similar way to the definition of $J'$ in Section \ref{j''}, i.e., by putting the labels on the crossings as in Figure \ref{fig:color2}, not putting label for other fundamental tangle diagrams, and by taking the product of the labels.

\subsection{Colored moves}\label{cm}

We define several moves on colored diagrams as follows.

The \textit{colored Pachner $(2,3)$ moves} are defined in Figure  \ref{fig:sandp2++}.
Note that each colored Pachner $(2,3)$ move involves a symmetry, and thus is not the Reidemeister III move on tangle diagrams.
 
The \textit{colored  $(0,2)$ moves} are defined in  Figure \ref{fig:02}.

The \textit{symmetry moves} are defined in Figure \ref{fig:symmetrycolor}.

The \textit{planar isotopies}  are defined in Figure \ref{fig:isotopy}.
\footnote{It is known that if two tangle diagrams $D$ and $D'$ are planer isotopic to each other, then $D$ and $D'$  are related by a sequence of the moves defined  in Figure \ref{fig:isotopy}, see e.g., \cite{Kas95}.}


\begin{figure}
\includegraphics[width=9cm,clip]{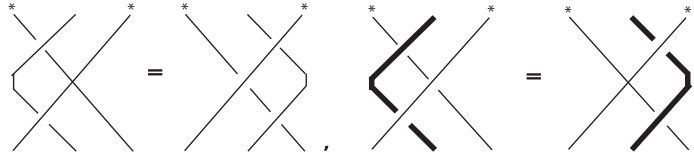}

\caption{The colored Pachner $(2,3)$ moves, where  the orientation  and the thickness of each $*$-marked strand are arbitrary}\label{fig:sandp2++}
\end{figure}

\begin{figure}
\includegraphics[width=3cm,clip]{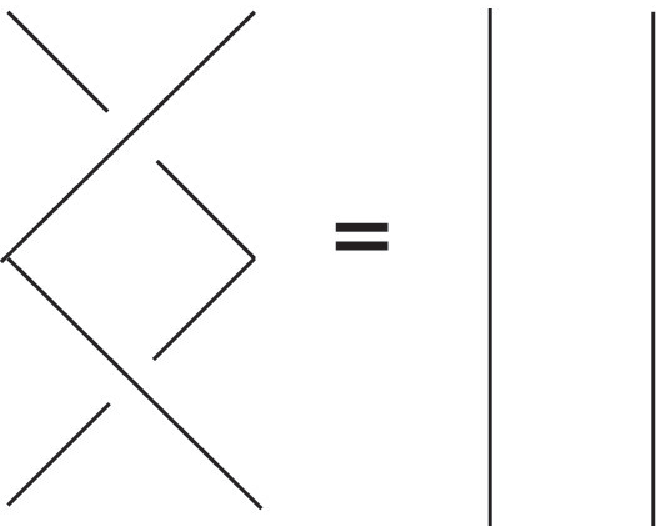}
\caption{The colored  $(0,2)$ moves,  where  the orientation and  the thickness of each strand are  arbitrary}\label{fig:02}
\end{figure}

\if0
We define the \textit{colored  $(0,8)$ move} as the move in Figure \ref{fig:08}.

\begin{figure}
\includegraphics[width=3cm,clip]{Pah08.eps}
\caption{The colored $(0,8)$ move }\label{fig:08}
\end{figure}
\fi

\begin{figure}
\includegraphics[width=8cm,clip]{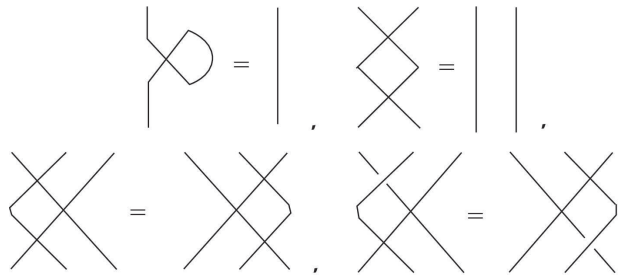}

\caption{The symmetry moves, where  the orientation and thickness of each strand are  arbitrary}\label{fig:symmetrycolor}
\end{figure}

 \begin{figure}
\includegraphics[width=13cm,clip]{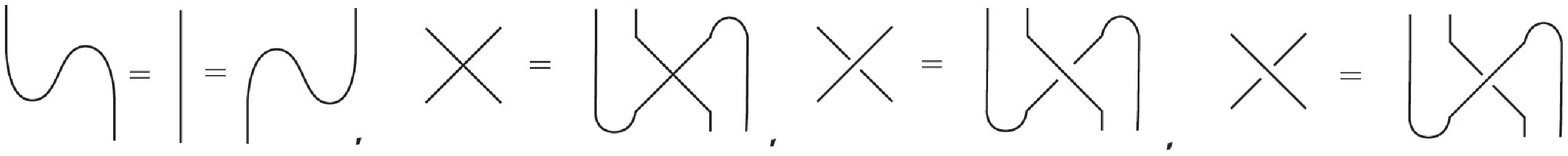}
\caption{The planar isotopies, where the orientation and thickness of each strand are arbitrary}\label{fig:isotopy}
\end{figure}

\if0
\begin{figure}
\includegraphics[width=10cm,clip]{naturarity.eps}
\caption{Naturality of the symmetry, where the orientations and  the thickness of each strand are  arbitrary. Here $X$ is an arbitrary colored diagram.}\label{fig:naturarity}
\end{figure}
\fi

We call each of the above move a \textit{colored move}.

Let $\sim_c$  be the equivalence relation on the set of colored diagrams  generated by all colored moves.

Similarly, let  $\sim'_c$ be the equivalence relation on the set of colored diagrams  generated by colored moves except for the moves in Figure \ref{fig:exception}.

\begin{figure}
\includegraphics[width=9cm,clip]{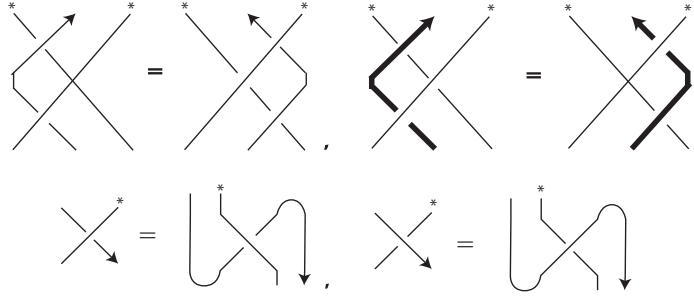}

\caption{The colored moves which are not in generators for $\sim'_c$, where the orientation  and the thickness of each $*$-marked strand are arbitrary}\label{fig:exception}
\end{figure}

We have the following.

\begin{theorem}\label{sakie}
The map $J'$ is an invariant under  $\sim'_c$.
If $\gamma^2=1$, then the map $J'$ is also an invariant under $\sim_c$.
\end{theorem}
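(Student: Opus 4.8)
The plan is to verify that $J'$ is preserved by each generator of the equivalence relation, checking the invariance move by move. Since $J'$ is defined as a product of labels (copies of variants of the $S$-tensor) read along the strands, each colored move amounts to an algebraic identity among these tensors in the appropriate tensor power of $H(A)\otimes H(A)^{\op}$. The key observation is that the colored Pachner $(2,3)$ moves correspond precisely to the variants of the pentagon relation collected in Proposition \ref{ys}, namely equations (\ref{pe1})--(\ref{pe8}), since each such move replaces a configuration of two tetrahedra (two crossings, hence two $S$-tensors) by one of three tetrahedra, with a symmetry accounting for the reindexing of tensorands. So the first and central step is to read off, for each of the eight colored Pachner $(2,3)$ moves in Figure \ref{fig:sandp2++} (accounting for the arbitrary orientation and thickness of the $*$-marked strand), which variant of the pentagon relation it encodes, and then invoke Proposition \ref{ys} directly.

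Next I would dispatch the remaining moves. The planar isotopies of Figure \ref{fig:isotopy} change neither the labels placed nor the order in which they are read, so $J'$ is manifestly unchanged; this is purely combinatorial. The symmetry moves of Figure \ref{fig:symmetrycolor} reflect the naturality of the symmetry map $\tau$ with respect to moving labels past a virtual crossing, together with $\tau^2=\id$, and these again hold by inspection of how labels are read. The colored $(0,2)$ moves of Figure \ref{fig:02} correspond to the identities $SS^{-1}=S^{-1}S=1$ (and the analogous statements for $S',S'',\tilde S$), since a $(0,2)$ move introduces or cancels an oppositely-signed pair of crossings on the same two strands, contributing a tensor and its inverse in adjacent positions.

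The subtle point, and the reason the theorem has two clauses, concerns exactly which moves survive into the generators of $\sim'_c$ versus $\sim_c$. For the weaker relation $\sim'_c$ one excludes the moves of Figure \ref{fig:exception}; these are the colored Pachner moves whose invariance would additionally force an identity involving $\gamma^2$ on the labels, owing to the $\gamma'$ and $(\bar\gamma^*)'$ twists attached when strands are oriented so as to produce compositions of antipodes. So the plan for the second clause is to show that the excluded moves hold precisely when $\gamma^2=1$: with that hypothesis the extra antipode-squared factors collapse to the identity, the relevant pentagon variant applies verbatim, and the move becomes valid. I would make this explicit by tracking the $\gamma$ and $\bar\gamma^*$ labels through one representative excluded move and checking that the residual obstruction is exactly a factor of $\gamma^2$ (equivalently $(\bar\gamma^*)^2$) on the tetrahedral label.

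The main obstacle I expect is the bookkeeping for the colored Pachner $(2,3)$ moves under all the allowed orientations and thicknesses of the $*$-marked strand: the thickness determines whether a given tensorand lives in $H(A)$ or in $H(A)^{\op}$, hence which of $S,S',S'',\tilde S$ appears, and the orientation determines the $\gamma'$ versus identity substitution. Rather than computing each of the many cases from scratch, I would reduce the orientation-reversed and thickness-dualized cases to the downward-oriented thin case by the same device used in Lemma \ref{jj'l}, namely applying $(\gamma_{D(A)}\otimes 1)$ or $(1\otimes\gamma_{D(A)})$ and using that $\phi^{\otimes 2}$ is an algebra homomorphism, so that invariance in one representative case propagates to the others. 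This confines the genuinely new algebraic content to the eight pentagon variants of Proposition \ref{ys} plus the $\gamma^2=1$ analysis for the excluded moves, and leaves everything else as routine verification.
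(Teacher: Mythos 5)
Your treatment of the colored Pachner $(2,3)$ moves (pentagon relations of Proposition \ref{ys}), the colored $(0,2)$ moves (invertibility of $S$, $S'$, $S''$, $\tilde S$), and the symmetry moves matches the paper's proof. The genuine gap is your claim that the planar isotopies of Figure \ref{fig:isotopy} ``change neither the labels placed nor the order in which they are read'' and are therefore purely combinatorial. This is false for the planar isotopies that involve a crossing: by Figure \ref{fig:color2} the label attached to a crossing depends on the orientations of its strands (each $\gamma'$ or $(\bar \gamma^*)'$ is $\gamma$ or $\bar \gamma^*$ on an upward strand and the identity otherwise), and a planar isotopy that drags a strand of a crossing across a local maximum or minimum reverses that strand's orientation, hence changes the label. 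Invariance under such a move is a nontrivial algebraic statement: for one sense of rotation it is exactly the identities $(\gamma \otimes 1)(S)=S^{-1}$, $(1\otimes \bar \gamma^*)(S^{-1})=S$, $(\gamma \otimes \bar \gamma ^*)(S)=S$ (and their analogues for $S'$, $S''$, $\tilde S$), which hold unconditionally; for the opposite sense it requires $(1 \otimes \bar \gamma^*)(S)=S^{-1}$ and $(\gamma \otimes 1)(S^{-1})=S$, i.e. in effect $(\gamma^2\otimes 1)(S)=S$, which is precisely where the hypothesis $\gamma^2=1$ enters.

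This misplacement undoes the two-clause structure of the theorem. The excluded moves of Figure \ref{fig:exception} are not, as you say, Pachner moves whose pentagon identity ``forces $\gamma^2$'': they consist of the wrong-sense planar isotopies involving a crossing together with the Pachner $(2,3)$ moves with middle strands oriented upwards, and the paper handles the latter by reducing them to such planar isotopies plus a downward Pachner move (all Pachner moves with downward middle strands follow from the pentagon relations alone, with upward $*$-marked strands handled by applying the antipode to the corresponding tensorands). Under your reading, every planar isotopy is manifestly $J'$-invariant, so your argument would prove invariance under all of $\sim_c$ with no hypothesis on $\gamma$ --- strictly more than the theorem asserts, and inconsistent with the very need to exclude Figure \ref{fig:exception} from the generators of $\sim'_c$. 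To repair the proof you must track the orientation-dependent labels through the planar isotopies involving crossings, verify the unconditional antipode identities for the non-excluded ones, and isolate the residual $\gamma^2$ obstruction in the excluded ones.
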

\begin{proof}
Let  $Z$ and $Z'$ be two colored diagram.

If  $Z$ and $Z'$ are related by a colored Pachner $(2,3)$ move with strands oriented downwards,
then $J'(Z)=J'(Z')$ follows from the pentagon relations (\ref{pe1})--(\ref{pe8}).
If some $*$-marked strands are upwards, then $J'(Z)=J'(Z')$ follows from the pentagon relations, after applying the antipode on each tensorand corresponding to an upward strand.

If  $Z$ and $Z'$ are related by a colored $(0,2)$ move,
then $J'(Z)=J'(Z')$ follows from the invertibility of $S$, $S'$, $S''$, and $\tilde S$.

If  $Z$ and $Z'$ are related by a symmetry move, or by a planar isotopy which does not involve a crossing,
then it is easy to see $J'(Z)=J'(Z')$.

Let us assume that $Z$ and $Z'$ are related by a planar isotopy which involves a crossing.
If the planar isotopy is not in Figure \ref{fig:exception}, then $J'(Z)=J'(Z')$ follows from
\begin{align*}
&(\gamma \otimes 1)(S)=S^{-1}, \quad (1\otimes \bar \gamma^*)(S^{-1})=S, \quad (\gamma \otimes \bar \gamma ^*)(S)=S \quad  \in  H(A)\otimes H(A), 
\\
&(\gamma \otimes 1)(S')=(S')^{-1}, \quad (1\otimes \bar \gamma^*)((S')^{-1})=S', \quad (\gamma \otimes \bar \gamma ^*)(S')=S' \quad  \in  H(A)^{\op}\otimes H(A),
\\
&(\gamma \otimes 1)(S'')=(S'')^{-1}, \quad (1\otimes \bar \gamma^*)((S'')^{-1})=S'', \quad (\gamma \otimes \bar \gamma ^*)(S'')=S'' \quad  \in H(A)\otimes H(A)^{\op},
\\
&(\gamma \otimes 1)(\tilde S)=\tilde S^{-1}, \quad (1\otimes \bar \gamma^*)(\tilde S^{-1})=\tilde S, \quad (\gamma \otimes \bar \gamma ^*)(\tilde S)=\tilde S \quad  \in H(A)^{\op}\otimes H(A)^{\op}.
\end{align*}
If the planar isotopy is in Figure \ref{fig:exception}, then we have $J'(Z)=J'(Z')$ if $\gamma^2=1$, by
\begin{align*}
&(1 \otimes \bar \gamma^*)(S)=S^{-1}, \quad (\gamma \otimes 1)(S^{-1})=S  \quad  \in  H(A)\otimes H(A), 
\\
&(1 \otimes \bar \gamma^*)(S')=(S')^{-1}, \quad (\gamma \otimes 1)((S')^{-1})=S'  \quad  \in  H(A)^{\op}\otimes H(A),
\\
&(1 \otimes \bar \gamma^*)(S'')=(S'')^{-1}, \quad (\gamma \otimes 1)((S'')^{-1})=S''  \quad  \in H(A)\otimes H(A)^{\op},
\\
&(1 \otimes \bar \gamma^*)(\tilde S)=\tilde S^{-1}, \quad (\gamma \otimes 1)(\tilde S^{-1})=\tilde S  \quad  \in H(A)^{\op}\otimes H(A)^{\op}.
\end{align*}

If  $Z$ and $Z'$ are related by a  colored Pachner $(2,3)$ move in Figure \ref{fig:exception}, i.e., a  colored Pachner $(2,3)$ move with middle strands oriented upwards, then  $Z$ and $Z'$ are related by planer isotopy and the colored Pachner $(2,3)$ move with middle strands oriented downwards.  Thus we have  $J'(Z)=J'(Z')$ by the above argument.

Thus we have the assertion.
\end{proof}

\subsection{Tangles and colored diagrams}\label{tacd}
Recall from  Section \ref{reco} the diagram $\zeta(D)$ associated to a tangle diagram $D$.
Actually $\zeta(D)$ is nothing but a colored diagram and $\zeta$ defines a map 
$$
\zeta \co \text{\{tangle diagrams\}}  \to \text{\{colored diagrams\}}.
$$
Let $\sim_{RII, RIII}$ be the regular isotopy, i.e., the equivalence relation of tangle diagrams generated by Reidemeister II, III moves and planar isotopies of tangle diagrams.
We have the following.

\begin{theorem}\label{invr}
 Let $D$ and $D'$ be two diagrams such that $D\sim_{RII, RIII} D'$.
 Then we have $\zeta(D)\sim_c \zeta(D)$. 
\end{theorem}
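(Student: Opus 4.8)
The plan is to reduce the statement to a move-by-move verification. Since $\sim_{RII,RIII}$ is generated by Reidemeister II moves, Reidemeister III moves, and planar isotopies of tangle diagrams, and since $\sim_c$ is an equivalence relation, it suffices to show that $\zeta$ sends each generating move of $\sim_{RII,RIII}$ to a sequence of colored moves. Concretely, if $D$ and $D'$ differ by a single Reidemeister II, Reidemeister III, or planar isotopy move, I would show $\zeta(D)\sim_c\zeta(D')$; the general statement then follows by concatenating such relations along the sequence connecting $D$ and $D'$.

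First I would treat the planar isotopies. Since $\zeta$ is defined by duplicating strands and thickening the left copy following the orientation, a planar isotopy of $D$ translates directly into a planar isotopy of the duplicated diagram $\zeta(D)$, possibly together with symmetry moves coming from the crossings between thin and thick copies of a strand as it moves. Thus this case reduces to the planar isotopies and symmetry moves already included among the colored moves. Next, for the Reidemeister II move, the image $\zeta$ of the two diagrams differing by an RII move will differ by pairs of a crossing and its inverse in each of the thin and thick copies; these cancel precisely by the colored $(0,2)$ moves of Figure \ref{fig:02}, which correspond to the invertibility of $S$, $S'$, $S''$, and $\tilde S$. I would check that the four strands of the duplicated picture produce exactly the cancelling pairs that a $(0,2)$ move removes, up to symmetry moves and planar isotopies to bring the diagram into the standard form.

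The main work, and the expected principal obstacle, is the Reidemeister III move. An RIII move on $D$ involves three strands and three crossings; after applying $\zeta$, each of these three strands becomes a thin--thick pair, so the local picture of $\zeta(D)$ has six strands and, after duplication, several crossings arranged in an octahedral pattern. The key observation is that the duplicated RIII configuration is exactly what a colored Pachner $(2,3)$ move (Figure \ref{fig:sandp2++}) relates, once one accounts for the symmetry that each colored Pachner $(2,3)$ move necessarily involves. I would therefore identify how a single RIII move decomposes, under $\zeta$, into a composite of colored Pachner $(2,3)$ moves together with symmetry moves and planar isotopies; the careful bookkeeping of which of $S,S',S'',\tilde S$ sits at each of the duplicated crossings, and of the orientations and thicknesses of the strands, is where the real effort lies. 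Since the statement does not require $\gamma^2=1$, I would make sure to route the argument through $\sim_c$ (not merely $\sim'_c$), so that the colored moves of Figure \ref{fig:exception} are available if an upward middle strand appears.

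A convenient bookkeeping device is to treat the two orientations and the thin/thick status uniformly: each RIII move comes in several oriented variants, and rather than checking all of them separately, I would fix one representative orientation (say all strands downward), verify the decomposition into colored Pachner $(2,3)$ moves there, and then obtain the remaining orientation cases by the symmetry moves and the planar isotopies of Figures \ref{fig:symmetrycolor} and \ref{fig:isotopy}, exactly as in the orientation-reduction already used in the proof of Theorem \ref{sakie}. The upshot is that $\zeta(D)\sim_c\zeta(D')$ for each generating move, and hence for all $D\sim_{RII,RIII}D'$, which is the claim.
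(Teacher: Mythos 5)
Your proposal follows essentially the same route as the paper's proof: the paper also argues move-by-move, realizing a Reidemeister II move by colored $(0,2)$ moves (four of them, as in Figure \ref{fig:ReiII}), a Reidemeister III move by colored Pachner $(2,3)$ moves (eight of them, as in Figure \ref{fig:ReiIII}), and planar isotopies of tangle diagrams by colored planar isotopies and symmetry moves (Figure \ref{fig:isotopyleft}), checking the downward-oriented case explicitly. The only difference is that the paper exhibits the explicit sequences of colored moves in figures, whereas you defer that bookkeeping; your overall strategy and the identification of which colored moves realize which Reidemeister move are exactly the paper's.
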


\begin{proof}

Let $D$ and $D'$ be two tangle diagrams related by a Reidemeister II move. We can transform $\zeta (D)$ to $\zeta(D')$  by applying  colored $(0,2)$ moves four times, see Figure \ref{fig:ReiII} for the case that each strand is oriented downwards.

Let $D$ and $D'$ be  two tangle diagrams related  by a Reidemeister III move. 
We can transform  $\zeta (D)$ to $\zeta(D')$  by applying  colored Pachner $(2,3)$ moves eight times, see Figure \ref{fig:ReiIII} for the case that each strand is oriented downwards.

Let $D$ and $D'$ be two tangle diagrams which are related by the planar isotopy.
Then we can also transform  $\zeta (D)$ to $\zeta(D')$  by the planar isotopies,
see Figure \ref{fig:isotopyleft} for  examples.

\begin{figure}
\includegraphics[width=9cm,clip]{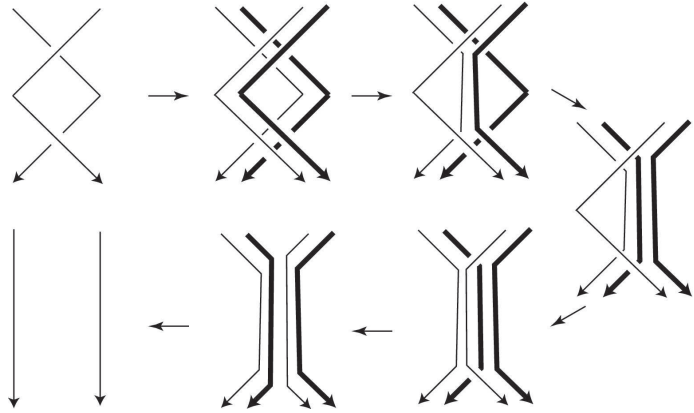}
\caption{A realization of Reidemeister II move using colored moves}\label{fig:ReiII}
\end{figure}

 \begin{figure}
\includegraphics[width=13cm,clip]{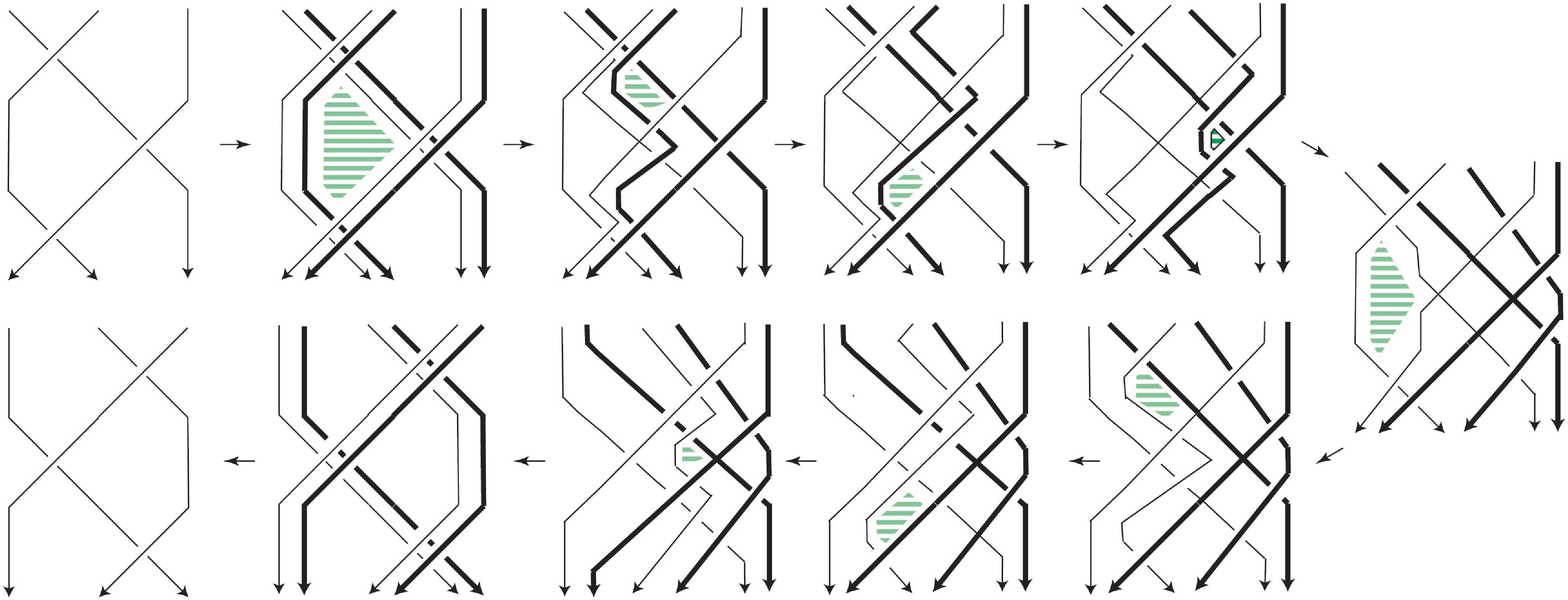}
\caption{A realization of Reidemeister III move  using colored moves}\label{fig:ReiIII}
\end{figure}


\begin{figure}
\includegraphics[width=13cm,clip]{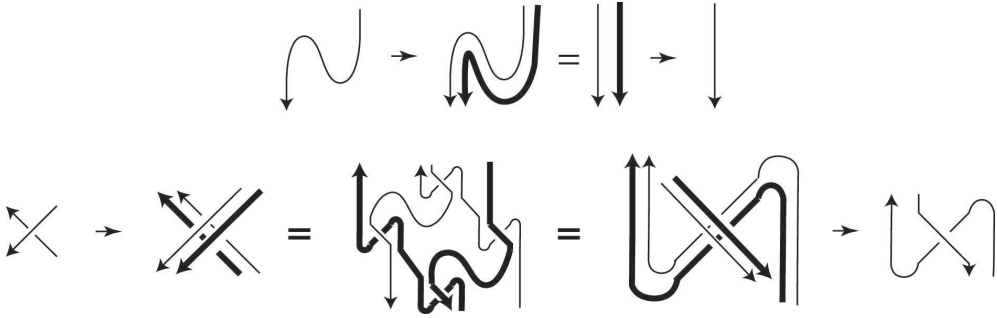}
\caption{Realizations of planar isotopies  using colored moves}\label{fig:isotopyleft}
\end{figure}

\if0
Let $D$ and $D'$ be two tangle diagrams which are related by a planar isotopies move in Figure \ref{fig:isotopy}  involving
\includegraphics[width=0.5cm,clip]{max.eps} or \includegraphics[width=0.5cm,clip]{min.eps}.
Note that using Reidemeister II and planar isotopies not involving
\includegraphics[width=0.5cm,clip]{max.eps} and \includegraphics[width=0.5cm,clip]{min.eps}, we have variants of Reidemeister II move as in Figure \ref{fig:isotopyleft2}.
We can transform $D_{(\leftarrow)}$ and $D_{(\leftarrow)}'$ by (variants of) Reidemeister II moves, Reidemeister III moves and planar isotopies not involving \includegraphics[width=0.5cm,clip]{max.eps} and \includegraphics[width=0.5cm,clip]{min.eps}, see Figure \ref{fig:isotopyright} for examples. 

 \begin{figure}
\includegraphics[width=8cm,clip]{isotopyleft20.eps}
\caption{Variants of Reidemeister II moves}\label{fig:isotopyleft2}
\end{figure}

 \begin{figure}
\includegraphics[width=13cm,clip]{isotopyright0.eps}
\caption{Realizations of planar isotopies of tangle diagrams involving local maxima and minima going from left to right}\label{fig:isotopyright}
\end{figure}
\fi

\if0
Let $D$ and $D'$   be  two tangle diagrams related  by a Reidemeister I move.
We can transform $D_{(\leftarrow)}$ to obtain $D_{(\leftarrow)}'$ by the variant of Reidemeister II move and a colored $(0,8)$ move as in Figure \ref{fig:RI}.

 \begin{figure}
\includegraphics[width=11cm,clip]{RI.eps}
\caption{Realization of Reidemeister I move}\label{fig:RI}
\end{figure}
\fi

\end{proof}

Note that the diagrammatic transformations in Figure \ref{fig:ReiIII} induces algebraic equations via the universal invariant $J'$, which gives a proof of Proposition \ref{ys}. See the  Table \ref{ta} for the situation.

\begin{table}
\begin{center}
\begin{tabular}{lcl}
Reidemeister III move & $\xrightarrow[]{J'}$ & quantum Yang-Baxter equation
\\
Colored Pachner $(2,3)${ moves} & $\xrightarrow[]{J'}$  & pentagon relations
\\
Figure \ref{fig:ReiIII}
 & $\xrightarrow[]{J'}$  & Proposition \ref{ys}
 \\
  (  Colored $(2,3)${ move} $\Rightarrow$  RIII move )  & & (pentagon relation $\Rightarrow$ quantum Yang-Baxter equation)
   \end{tabular}
\end{center}
\caption{Correspondence between topological situation and algebraic situation}\label{ta}
\end{table}

\section{$3$-dimensional descriptions: colored diagrams and colored singular triangulations}\label{cit}
In this section we associate a \textit{colored tetrahedron} to each crossing of a colored diagram $Z$, and define a \textit{colored cell complex} associated to $Z$.
Using a colored cell complex we define  a \textit{colored singular triangulation} of a topological space.
As a result,  the universal quantum invariant $J'$ turns out to be an invariant of  colored singular triangulations, where
a copy of the $S$-tensor is attached to each colored tetrahedron.

\subsection{Colored tetrahedra}\label{coi}

Consider a tetrahedron $\Gamma$ in the oriented space $\mathbb{R}^3$ with an ordering of its $2$-faces $f_1,f_2,f_3,f_4$. We stick $\Gamma$ by two strands going into $\Gamma$ at $f_1$ (resp. $f_3$) and out of $\Gamma$ at $f_2$ (resp. $f_4$).  Note that there are two types of such tetrahedra up to rotation  as in Figure \ref{fig:tridiag}, where such a tetrahedra is presented by a crossing so that the strand piercing $f_1$ and $f_2$ is over. 
We consider two types of strands, depicted by thick and thin strands, and then there are eight types of such  tetrahedra, which we call  \textit{colored tetrahedra},  presented by eight types of crossings as in Figure \ref{fig:colortri}.

\begin{figure}
\includegraphics[width=10cm,clip]{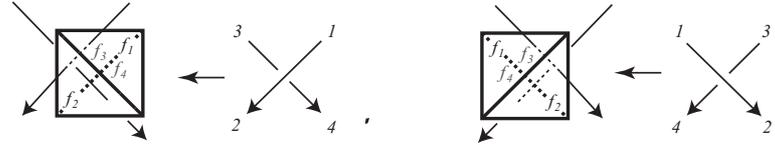}
\caption{Two types of tetrahedra which are sticked by two ordered strands}\label{fig:tridiag}
\end{figure}

\begin{figure}
\includegraphics[width=11cm,clip]{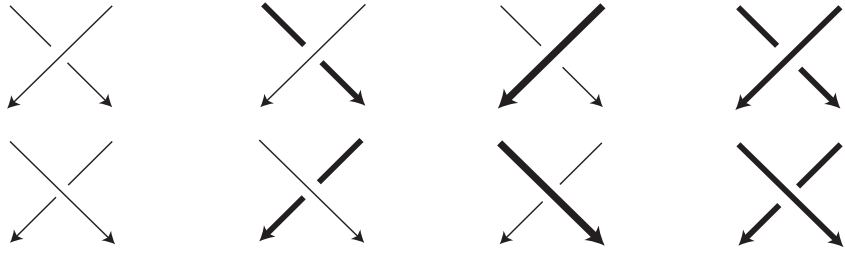}
\caption{Colored tetrahedra}\label{fig:colortri}

\end{figure}

\subsection{Colored diagrams and colored cell complexes}\label{codiag}

We define a \textit{colored cell complex} $\mathcal{C}(Z)$ associated to a colored diagram $Z$  as follows.

Recall that  $Z$ consists of fundamental tangles and symmetries.
Let  $\{c_1,\ldots c_k\}$ be the set of crossings in $Z$.
To each crossing $c_i$, associate a colored tetrahedron $\Gamma_i$ as in Section \ref{coi}.
See Figure \ref{cell} for an example.

\if0
\begin{figure}
\includegraphics[width=3cm,clip]{pillow.eps}
\caption{How to place  triangles}\label{pillow}
\end{figure}
\fi

\begin{figure}
\includegraphics[width=6cm,clip]{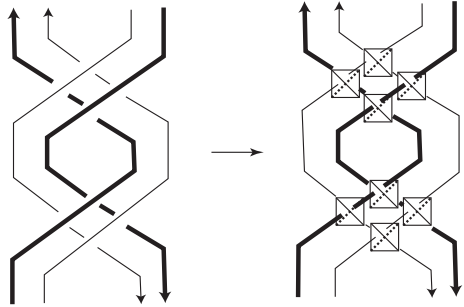}
\caption{How to associate tetrahedra  on a colored diagram}\label{cell}
\end{figure}

We define $\mathcal{C}(Z)$ to be the cell complex obtained from colored tetrahedra $\Gamma_1, \ldots ,\Gamma_k$
by gluing them along their $2$-faces as follows.
\begin{enumerate}
\item $2$-faces $F$ and $F'$ of $\Gamma_1, \ldots ,\Gamma_k$ 
are glued if and only if $F$ and $F'$ are adjacent along $Z$.
\item We mark by $*$ the vertex of each $2$-face of $\Gamma_1, \ldots ,\Gamma_k$ 
as in Figure \ref{fig:colorpaste} depending on the thickness of strands and the order of the faces in a tetrahedron, and glue adjacent faces $F$ and $F'$ so that the $*$-marked vertices are attached. 
\end{enumerate}

\begin{figure}
\includegraphics[width=8cm,clip]{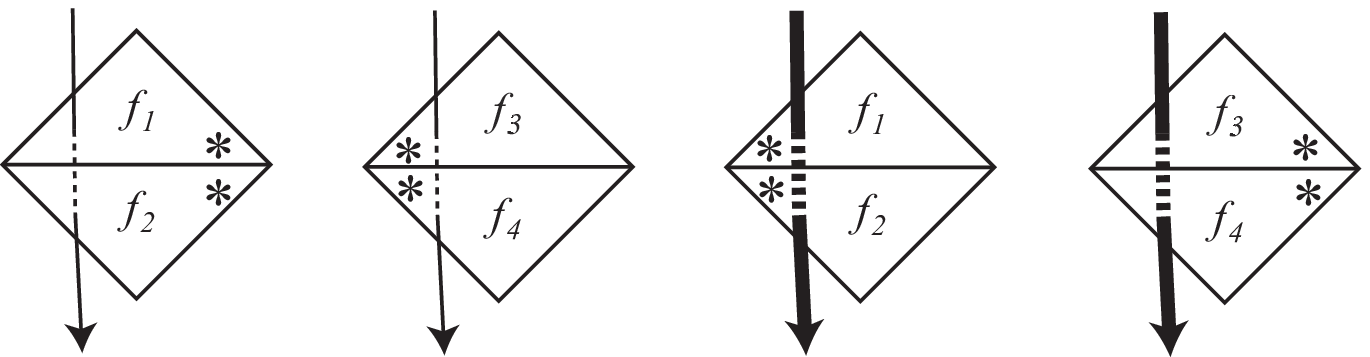}
\caption{How to mark by $*$ a vertex of each triangle}\label{fig:colorpaste}

\end{figure}

\subsection{Colored singular triangulations and colored ideal triangulations}\label{CD}

For a space $X$,  a \textit{singular triangulation} (see e.g., \cite{TV, BB1}) of $X$ consists of a finite index set $I$, a function $d\co I \to \mathbb{N}$, and continuous maps $f_{i} \co \Delta^{d(i)}\to X$ for $i\in I$, where $\Delta^{n}$ is the standard $n$ simplex, such that 
$(I, d, \{f_i\}_{i\in I})$ is a finite cell decomposition of $X$, and for each $i\in I$ and a face $F$ in $\Delta^{d(i)}$, the restriction  $f_i|_F$ is the composition $f_j \circ g$  of an affine isomorphism $g\co F \to \Delta^{d(j)}$ and $f_j$ for some $j \in I$.

Let $\mathcal{C}(Z)$ be the colored cell complex of a colored diagram $Z$, which we can naturally regard a singular triangulation.
Consider
\begin{align*}
X = \mathcal{C}(Z)/(e_1=e'_1,\ldots ,e_k=e'_k, v_1=v'_1,\ldots ,v_l=v'_l)
\end{align*}
be a singular triangulation obtained from $\mathcal{C}(Z)$ by identifying some pairs of edges $(e_1,e'_1),\ldots , (e_k,e'_k)$,  $k\ge 0$,  and some pairs of vertices $(v_1,v'_1),\ldots ,(v_l,v'_l), l\ge0,$ in $\mathcal{C}(Z)$. 
We call $X$ a \textit{colored singular triangulation} (coloring) of type $Z$.
In particular,  if $X$  is an ideal triangulation of some topological space $\tilde X$, then we call it a \textit{colored ideal triangulation} of  $\tilde X$.  

Let $\mathcal{CT}(Z)$ be the set of colored singular triangulations of type $Z$ and set
\begin{align*}
\mathcal{CT}=\bigcup_{Z\in \mathcal{CD}} \mathcal{CT}(Z).
\end{align*}


\begin{remark}\label{branch}

In this remark, we assume $3$-manifolds are connected, compact, oriented, and with non-empty boundary.

In \cite{BP, BB5, BB6},  Benedetti-Petronio and Baseilhac-Benedetti used so called \textit{$\mathcal{N}$-graphs} to represent  \textit{branched} ideal triangulations of a $3$-manifolds and dual oriented \textit{standard branched} spines of them. In this remark we consider abstract $\mathcal{N}$-graphs, i.e., we do not take planar immersions of them.

Let $\mathcal{BTR}$ the set  of branched ideal triangulations of $3$-manifolds, 
$\mathcal{BSP}$ the set  of oriented standard branched spines of $3$-manifolds, and  $\mathcal{NG}$ the set of $\mathcal{N}$-graphs with the color $0 \in \mathbb{Z}/3\mathbb{Z}$ on every edge.

We have the bijections 
\begin{align*}
\mathcal{NG} \to \mathcal{BSP}, \quad G \mapsto BSP(G),
\\
\mathcal{NG} \to \mathcal{BTR},  \quad G \mapsto BTR(G),
\end{align*}
where $BSP(G)$ is obtained from $G$ \cite{BB5} so that a $4$-valent vertex encodes a branched
tetrahedron, and  $BTR(G)$ is the  branched ideal triangulations which is the dual of the  oriented standard branched spine $BSP(G)$.

Let $\mathcal{CCD}$ be the set of equivalence classes of \textit{closed} colored diagrams up to planar isotopies and symmetry moves.
We have the surjective map
\begin{align*}
p\co \mathcal{CCD} \to \mathcal{NG}, \quad Z\mapsto p(Z)
\end{align*}
where $p(Z)$ is the $\mathcal{N}$-graph obtained from $D$ by reversing the orientation of thick strands. 

It is not difficult to check that the branched ideal triangulation $BTR(p(Z))$ is the colored singular triangulation  of type $Z$ obtained from $\mathcal{C}(Z)$ by identifying some edges and vertices so that $BSP(p(Z))$ becomes a standard spine, i.e., the complement of the vertices in the singular set of $BSP(p(Z))$ is a union of segments, and the complement of the singular set in $BSP(p(Z))$ is a union of disks.
For an example with link complements, see the proof of Proposition \ref{ci}.
\end{remark}

\subsection{Colored moves and colored singular triangulations}
We can  translate  colored moves on the set $\mathcal{CD}$ of colored diagrams defined in Section \ref{cm} to moves on  the set $\mathcal{CT}$ of colored singular triangulations  as follows.

For colored diagrams $Z$ and $Z'$, let $X$ and $X'$ be colored singular triangulations of types $Z$ and $Z'$, respectively.
Let 
\begin{align*}
\psi\co \mathcal{C}(Z) \to X, \quad \psi'\co \mathcal{C}(Z') \to X', 
\end{align*}
be the projections.
We say that $X$ and $X'$ are related by a colored Pachner $(2,3)$ move  if 
\begin{enumerate}
\item  the colored diagram $Z$ and $Z'$ are related by a colored Pachner $(2,3)$ move, and
\item  $\psi=\psi'$ on the exteriors $\mathcal{C}(Z)\setminus W=\mathcal{C}(Z')\setminus W'$,
where  $W$ (resp. $W'$) is  the subcomplexes of $\mathcal{C}(Z)$ (resp.  $\mathcal{C}(Z')$)  consisting of the three (resp. two) tetrahedra 
corresponding to the three (resp. two)  crossings of $Z$ (resp. $Z'$) involved in the colored Pachner $(2,3)$ move.
\end{enumerate}
We define other colored moves  on colored singular triangulations  similarly. 

Then the colored Pachner $(2,3)$ move on $\mathcal{CT}$ turns out to be the Pachner $(2,3)$ move on singular triangulations, defined  in Figure \ref{fig:Pach} (a), replacing two tetrahedra sharing one face with three tetrahedra, or its inverse.
See Figure \ref{fig:sandp} for an example, where we color over-strands red and under-strands blue so that we can distinguish them in $3$-spaces in the lowest picture. 

The colored $(0,2)$ move on $\mathcal{CT}$  turns out to be the $(0,2)$ move on singular triangulations, defined  in Figure \ref{fig:Pach} (b),  replacing two adjacent $2$-faces with two tetrahedra, or its inverse.

\begin{figure}
\includegraphics[width=10cm,clip]{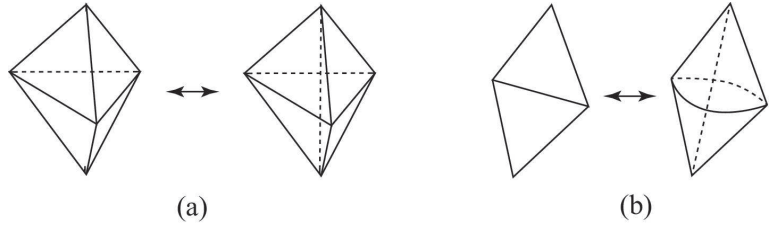}

\caption{(a) The Pachner $(2,3)$ move, (b) The $(0,2)$ move }\label{fig:Pach}
\end{figure}

\begin{figure}
\includegraphics[width=11cm,clip]{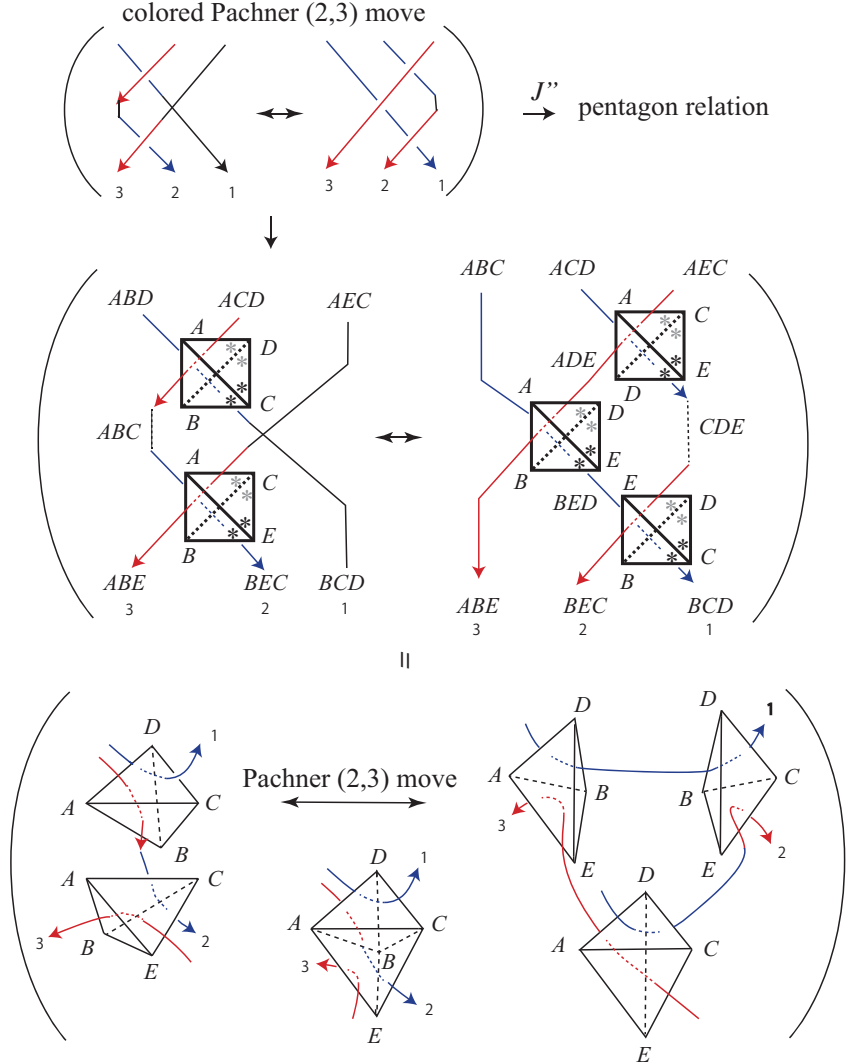}
\caption{A colored Pachner $(2,3)$ move of colored diagrams and a Pachner $(2,3)$ move of colored cell complexes, whose image of $J'$ turns out to be the pentagon relation $S_{23}S_{12}=S_{12}S_{13}S_{23}$ }\label{fig:sandp}
\end{figure}

\if0
If $Z$ and $Z'$ are related by a colored $(0,8)$ move,  then  $\mathcal{C}(Z')$  is obtained   from $\mathcal{C}(Z)$  by  removing or adding the eight tetrahedra of $\mathcal{C}(Z)$.
\fi

Correspondingly to  the equivalence relations $\sim_c$  and $\sim'_c$ on $\mathcal{CD}$,
we define  the equivalence relations $\sim_{ct}$ and  $\sim'_{ct}$  on  $\mathcal{CT}$, i.e.,  $\sim_{ct}$ is  generated by all colored moves,
and  $\sim'_{ct}$ is generated by colored moves except for the moves in Figure \ref{fig:exception}.

Let $\pi\co \mathcal{CT} \to \mathcal{CD}$
be the map such that $\pi(X)=Z$ for $X\in \mathcal{CT}(Z)$.

For $\mu=(\mu_1,\ldots,\mu_m), \nu=(\nu_1,\ldots,\nu_m) \in \{\pm\}^m$, $m\geq 0$, recall from  Sect \ref{coloredd}
the subset $\mathcal{CD}(\mu; \nu)\subset \mathcal{CD}$. 
Let  $\mathcal{CT}(\mu; \nu)$ be the set of colored singular triangulations of types  in $\mathcal{CD}(\mu; \nu)$.
Note that $\mathcal{CT}=\bigcup_{\mu, \nu \in \{\pm\}^m, m\geq 0} \mathcal{CT}(\mu; \nu)$.

\begin{proposition}
The composition 
\begin{align*}
J'\circ \pi\co  \mathcal{CT}(\mu; \nu) \to    \bigotimes_{i\in I_+} H(A)_i^{\nu_i} \bigotimes_{j\in I_-} (H(A)^{\op})_j^{\nu_j}
\end{align*}
of the restriction of $\pi$ to $ \mathcal{CT}(\mu; \nu)$ and the universal quantum invariant $J'$ is an invariant   under $\sim'_{ct}$.
If $\gamma^2=1$, then $J'\circ \pi$ is also an invariant under $\sim_{ct}$.
\end{proposition}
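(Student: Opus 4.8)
The plan is to reduce the statement to Theorem \ref{sakie}, which already establishes the invariance of $J'$ under $\sim'_c$ (and under $\sim_c$ when $\gamma^2=1$) at the level of colored diagrams. The key observation is that $J'\circ\pi$ depends on a colored singular triangulation $X$ only through its type $Z=\pi(X)\in\mathcal{CD}$; that is, the composition genuinely factors through $\pi$, its value being $J'(Z)$. Hence, to prove invariance, it suffices to show that the projection $\pi$ carries the equivalence relation $\sim'_{ct}$ (resp. $\sim_{ct}$) on $\mathcal{CT}$ into the equivalence relation $\sim'_c$ (resp. $\sim_c$) on $\mathcal{CD}$.

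First I would unwind the definitions of the colored moves on $\mathcal{CT}$. By the construction in the subsection on colored moves and colored singular triangulations, two colored singular triangulations $X$ and $X'$ are declared to be related by a given colored move precisely when their types $Z=\pi(X)$ and $Z'=\pi(X')$ are related by the corresponding colored move on $\mathcal{CD}$, together with the compatibility condition $\psi=\psi'$ on the exteriors $\mathcal{C}(Z)\setminus W=\mathcal{C}(Z')\setminus W'$. In particular, for each generator of $\sim'_{ct}$ (colored Pachner $(2,3)$ moves not listed in Figure \ref{fig:exception}, colored $(0,2)$ moves, symmetry moves, and planar isotopies) its image under $\pi$ is exactly the corresponding generator of $\sim'_c$, and likewise for the passage $\sim_{ct}\to\sim_c$. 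Therefore $X\sim'_{ct}X'$ implies $\pi(X)\sim'_c\pi(X')$, and $X\sim_{ct}X'$ implies $\pi(X)\sim_c\pi(X')$.

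I would then simply combine this with Theorem \ref{sakie}. If $X\sim'_{ct}X'$, then $\pi(X)\sim'_c\pi(X')$, so $J'(\pi(X))=J'(\pi(X'))$ by the invariance of $J'$ under $\sim'_c$; this yields invariance of $J'\circ\pi$ under $\sim'_{ct}$. When $\gamma^2=1$, Theorem \ref{sakie} additionally gives invariance of $J'$ under $\sim_c$, and the identical reasoning upgrades this to invariance of $J'\circ\pi$ under $\sim_{ct}$.

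The only point requiring genuine care, and hence the main (mild) obstacle, is the verification in the second step that $\pi$ sends generators to generators for \emph{every} type of colored move: one must check that the compatibility condition $\psi=\psi'$ on the exteriors indeed guarantees that the identifications of edges and vertices outside the move region are preserved, so that a move on triangulations restricts to a bona fide move on the underlying types with no hidden global interference. Since the colored moves on $\mathcal{CT}$ were defined precisely so as to sit over the colored moves on $\mathcal{CD}$, this is a definitional check rather than a substantial argument; once it is in place, the proposition follows formally.
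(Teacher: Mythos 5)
Your proposal is correct and follows essentially the same route as the paper: the paper's proof consists precisely of noting that $\pi$ induces a map $\mathcal{CT}/\sim'_{ct} \to \mathcal{CD}/\sim'_c$ (resp. $\mathcal{CT}/\sim_{ct} \to \mathcal{CD}/\sim_c$), which is your observation that colored moves on $\mathcal{CT}$ sit over colored moves on $\mathcal{CD}$ by definition, and then invoking the invariance of $J'$ established in Theorem \ref{sakie}. Your write-up merely makes explicit the definitional check that the paper leaves implicit.
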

\begin{proof}
Note that the projection map $\pi$ induces the map
\begin{align*}
\mathcal{CT}/\sim'_{ct} \to \mathcal{CD}/\sim'_c\quad  (\text{resp. \ } \mathcal{CT}/\sim_{ct} \to \mathcal{CD}/\sim_c),
\end{align*}
which shows the invariance of  $J'\circ \pi$  under $\sim'_{ct}$ (resp. $\sim_{ct}$ if $\gamma^2=1$).
\end{proof}

We call $J'\circ \pi$ the \textit{universal quantum invariant} of colored singular triangulations.
Note that the invariance of $J'\circ \pi$ under  colored Pachner $(2,3)$ moves are shown by pentagon relations.

\section{Octahedral triangulation of tangle complements }\label{gotc}

In this section we define \textit{ideal triangulations of tangle complements}, and construct examples called the \textit{octahedral triangulations}.
We will show that the octahedral triangulation associated to a tangle diagram $D$ naturally  admits a structure of a colored ideal triangulation of type $\zeta(D)$.

\subsection{Ideal triangulations of tangle complements}\label{idtri}

Let $M$ be a compact manifold of dimension $n\le 3$, possibly with non-empty boundary.
Let $F$ be an $(n-1)$-submanifold of $\partial M$.
Let $F_1,..., F_k$ be the connected components of $F$.
Let $M//F$ denote the topological space obtained from $M$ by collapsing each $F_i$ into a point.
An \textit{ideal triangulation} of the pair $(M,F)$ is defined to be a singular triangulation of $M//F$
 such that each vertex of the singular triangulation is on a point arising from $F$.

Let $D_n=[0,1]^2\setminus (P_1\cup \cdots \cup P_n)$ be a punctured disk, where $P_1,\ldots, P_n$ are  small disks with the centers arranged on the line $[0,1]\times \{1/2\}$ as  in Figure \ref{punc}(a).
We define the \textit{leaves-ideal triangulation} $l_n$ of $D_n$ to be the ideal triangulation of the pair $(D_n, ([0,1]\times \{0,1\})\cup \partial P_1 \cup \cdots \cup \partial P_n)$ as in Figure \ref{punc}(b), where we denote by $-\infty, +\infty, p_1,\ldots, p_n$ the vertices corresponding to $[0,1]\times \{0\},  [0,1]\times \{1\}, \partial P_1, \ldots,  \partial P_n$, respectively.
Here we formally define $l_0$ as a segment having $\{\pm \infty\}$ as its vertices.
In particular we call $l_1$ a \textit{leaf}.

\begin{figure}
\includegraphics[width=7cm,clip]{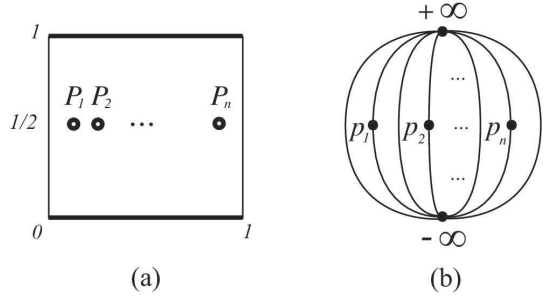}
\caption{(a) A punctured disk and (b) its leaves-ideal triangulation}\label{punc}

\if0
\vspace{-20pt}
\begin{picture}(0,0)
\put(-105,100){$P_1 \ P_2$}
\put(-38,100){$P_n$}
\put(150,90){$=l_n$}
\put(90,148){$+\infty$}
\put(90,33){$-\infty$}
\end{picture}
\fi
\end{figure}

Let $T=T_{1}\cup \cdots \cup T_{n}$ be an  $n$-component tangle.
Let $E=\overline{[0,1]^3 \setminus N(T)}$ be the complement of $T$ in the cube,
where $N(T)$ is a tubular neighborhood of $T$ in the cube.
Let $F_T$ be the intersection $\partial E \cap N(T)$, which consists of  annuli and tori.
Then an \textit{ideal triangulation of the tangle complement} $E$ of $T$ in the cube is defined to be an ideal triangulation of $(E, F_T\cup F_{z=0}\cup F_{z=1})$, where $F_{z=0}=[0,1]\times [0,1]\times \{0\}$ and $F_{z=1}=[0,1]\times [0,1]\times \{1\}$, such that its restriction to each boundary component $[0,1]\times \{0,1\}\times [0,1]$ is a leaves-ideal triangulation. The vertices corresponding to  $F_{z=0}$, and $F_{z=1}$
are denoted by $-\infty$  and $+\infty$, respectively.

\subsection{Colored ideal triangulations for octahedral triangulations of tangle complements}\label{id}

A tangle diagram  $D$ is called \textit{non-splitting} if
\begin{enumerate}
\item the $4$-regular plane graph giving the diagram $D$  is connected,  and
\item there is not a component of $D$ such that crossings  along the path of the component are only over-passing or only under-passing.
\end{enumerate}

Let $T$ be a tangle and $D$ its non-splitting diagram which has at least one crossing.
We  define a cell complex $\mathcal{O}(D)$,  which we call the \textit{octahedral triangulations} associated to $D$, which is an ideal triangulation of the tangle complement $E$.
If in addition $D$ is a link diagram, then $\mathcal{O}(D)$ is nothing but the octahedral triangulation studied in e.g., \cite{CKK14, Yok11} in the context of the hyperbolic geometry.

\noindent\textbf{Step 1. Take a colored diagram}

Recall from Section \ref{j''} the colored diagram $\zeta(D)$  obtained from $D$ by duplicating and thickening the left strands following the orientation.

\noindent\textbf{Step 2. Preparing and placing octahedra}

Let $\{c_{1},\ldots c_{k}\}$ be the set of crossings of the diagram $D$.
In a neighborhood of  $\zeta(c_{i})$, there are four crossings $t_{1}^{i}, t_{2}^{i}, t_{3}^{i}, t_{4}^{i}$ as in Fig \ref{fig:color}, where   $t_{1}^{i}$ is the right crossing when we see strands oriented downwards,  and  $t_{2}^{i}, t_{3}^{i}, t_{4}^{i}$ are defined one by one in a counterclockwise order.
As in Figure  \ref{fig:color}, for $j=1,2,3,4,$ we associate a tetrahedron $\Lambda_{j}^{i}=n^i_j \tilde e^i_{j} \tilde e'^i_{j} s^i_j$ to each $t_{j}^{i}$. 
Then we glue the four tetrahedra  $\Lambda_{1}^{i}, \Lambda_{2}^{i}, \Lambda_{3}^{i}, \Lambda_{4}^{i}$ together  to obtain an octahedron $o_{i}=n^{i}e^{i}_{12}e^{i}_{23}e^{i}_{34}e^{i}_{41}s^{i}$, so that $n^i_j$, $\tilde e^i_{j}$, $\tilde e'^i_{j}$, and $s^i_j$ are going to
$n^i$, $e^i_{j-1,j}$, $e^i_{j,j+1}$, and $s^i$, respectively, where  the index $j$ should be considered modulo $4$. We place $o_{i}$ between the two original strands of $c_i$ so  that $n^{i}$ and $s^{i}$ are placed on the over-strand and the under-strand, respectively.
\if0
\item[\rm{(ii)}] Let $\{a_1,\ldots, a_r \}$ be the set of local maxima and minima, and boundary points of $D$.
For $i=1,\ldots, r$ we prepare a digon $Q_i$ obtained from two triangles $q_i$ and $q'_i$ attaching to each other along two edges as in Figure \ref{doublepillow} (a), and place $Q_i$ to $a_i$ as in  Figure \ref{doublepillow} (b).
\fi

\begin{figure}
\includegraphics[width=13cm,clip]{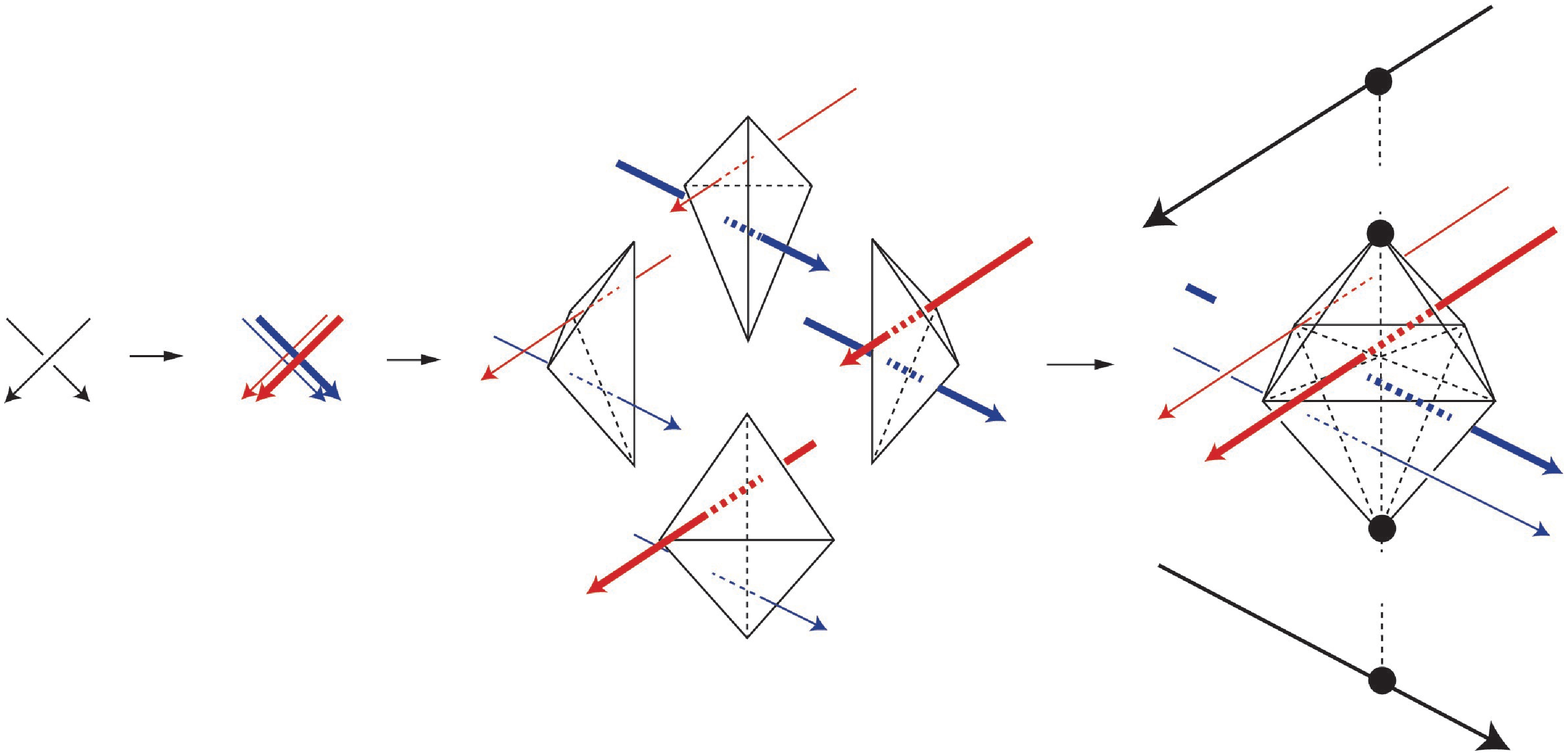}

\vspace{-27pt}
\begin{picture}(0,0)
\put(-172,60){$c_i$}
\put(-100,77){$t^i_1$}
\put(-115,90){$t^i_2$}
\put(-130,77){$t^i_3$}
\put(-115,60){$t^i_4$}

\put(-7,140){$n^i_2$}
\put(-30,127){$\tilde e'^{i}_{2}$}
\put(10,127){$\tilde e^{i}_{2}$}
\put(-7,74){$s^i_2$}

\put(30,105){$n^i_1$}
\put(45,85){$\tilde e'^{i}_{1}$}
\put(50,70){$\tilde e^{i}_{1}$}
\put(30,50){$s^i_1$}

\put(-45,105){$n^i_3$}
\put(-60,88){$\tilde e^{i}_{3}$}
\put(-62,70){$\tilde e'^{i}_{3}$}
\put(-45,50){$s^i_3$}

\put(3,64){$n^i_4$}
\put(-35,40){$\tilde e^{i}_{4}$}
\put(17,40){$\tilde e'^{i}_{4}$}
\put(3,5){$s^i_4$}

\put(143,115){$n^i$}
\put(166,90){$e^{i}_{12}$}
\put(110,90){$e^{i}_{23}$}
\put(103,65){$e^{i}_{34}$}
\put(178,65){$e^{i}_{41}$}
\put(143,25){$s^i$}
\end{picture}

\caption{Octahedral triangulation around a crossing}\label{fig:color}
\end{figure}

\if0
\begin{figure}
\includegraphics[width=9cm,clip]{doublepillow.eps}
\caption{(a) A digon,  (b) How to place a digon}\label{doublepillow}

\end{figure}
\fi

\noindent\textbf{Step 3.  Gluing octahedra}
\begin{figure}
\includegraphics[width=3cm,clip]{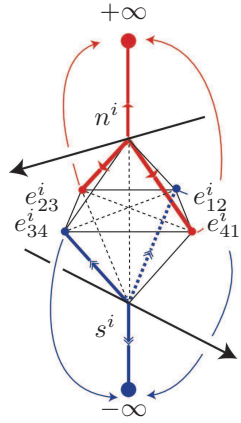}
\begin{picture}(0,0)
\put(-57,110){$n^i$}
\put(-20,80){$e^{i}_{12}$}
\put(-83,80){$e^{i}_{23}$}
\put(-87,70){$e^{i}_{34}$}
\put(-15,70){$e^{i}_{41}$}
\put(-57,30){$s^i$}
\put(-55,150){$+\infty$}
\put(-55,0){$-\infty$}
\end{picture}
\caption{How to glue the edges in a octahedron}\label{fig:attach}
\end{figure}

We glue the octahedra $o_{1},\ldots, o_{k}$
as follows.

For each positive (resp. negative) crossing $c_{i}$, we pull  the vertices $e^{i}_{23}$ and $e^{i}_{41}$ (resp. $e^{i}_{12}$ and $e^{i}_{34}$) upwards, put them on $+\infty$, 
and glue the two edges $n^{i}$-$e^{i}_{23}$ and $n^{i}$-$e_{41}$ (resp. $n^{i}$-$e^{i}_{12}$ and $n^{i}$-$e_{34}$).  Similarly, pull  the vertices $e^{i}_{12}$ and $e^{i}_{34}$  (resp. $e^{i}_{23}$ and $e^{i}_{41}$) downwards, put them on $-\infty$,  and glue the two edges $s^{i}$-$e^{i}_{12}$ and $s^{i}$-$e_{34}$ (resp. $s^{i}$-$e^{i}_{23}$ and $s^{i}$-$e_{41}$), see Figure \ref{fig:attach}. Note that the boundary of the octahedron $o_i$ consists of  four leaves corresponding to the four edge of $c_i$, see Figure \ref{fig:leafs}. 
We glue the  octahedra $o_{1},\ldots, o_{k}$
along the pairs of leaves which are adjacent on  $D$ so that 
$\pm \infty$ are attached compatibly.  
We call the result the \textit{octahedral triangulation} of the complement of $T$ associated to a diagram $D$, and  denote it by $\mathcal{O}(D)$. 
\begin{figure}
\includegraphics[width=3cm,clip]{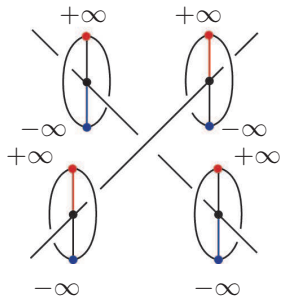}

\vspace{20pt}
\begin{picture}(0,0)
\put(-30,123){$+\infty$}
\put(13,123){$+\infty$}
\put(-50,70){$+\infty$}
\put(35,70){$+\infty$}
\put(-40,20){$-\infty$}
\put(20,20){$-\infty$}
\put(-45,80){$-\infty$}
\put(30,80){$-\infty$}
\end{picture}
\vspace{-20pt}
\caption{Leaves corresponding to the four edges of a crossing}\label{fig:leafs}

\end{figure}

It is not difficult to check that $\mathcal{O}(D)$ is an ideal triangulation of the complement of $T$.
Moreover, we have the following.

\begin{proposition}\label{ci}
The octahedral triangulation $\mathcal{O}(D)$ associated to a tangle diagram $D$ admits a colored ideal triangulation of type $\zeta(D)$.
\end{proposition}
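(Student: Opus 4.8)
The plan is to realize $\mathcal{O}(D)$ explicitly as the quotient of the colored cell complex $\mathcal{C}(\zeta(D))$ by a set of edge and vertex identifications, and then to invoke the fact, already observed just before the statement, that $\mathcal{O}(D)$ is an ideal triangulation of the tangle complement $E$. By the definition in Section \ref{CD}, a colored ideal triangulation of type $\zeta(D)$ is precisely such a quotient that happens to be an ideal triangulation, so this will establish the claim. The real content is therefore purely combinatorial: to match the tetrahedra, their colors, and their face-gluings in the two complexes, and then to isolate the extra identifications as exactly those permitted in Section \ref{CD}.

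First I would match the tetrahedra. The crossings of the colored diagram $\zeta(D)$ are exactly the $t^i_j$ for $i=1,\dots,k$ and $j=1,2,3,4$ of Figure \ref{fig:color}. By the construction of Section \ref{codiag}, $\mathcal{C}(\zeta(D))$ assigns to each such crossing one colored tetrahedron, whose type among the eight of Figure \ref{fig:colortri} is determined, via Section \ref{coi}, by the thin/thick pattern of the four strands meeting at $t^i_j$. Comparing this with Step 2 of the octahedral construction, the assigned tetrahedron is precisely $\Lambda^i_j=n^i_j\tilde e^i_j\tilde e'^i_j s^i_j$, carrying the same color. Thus the two complexes have literally the same set of colored tetrahedra.

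Next I would match the face-gluings, which split into two families. The \emph{internal} gluings arise from arcs of $\zeta(D)$ lying inside a single $\zeta(c_i)$: the crossings $t^i_1,\dots,t^i_4$ are cyclically adjacent, and under the gluing rule of Section \ref{codiag} together with the $*$-marking of Figure \ref{fig:colorpaste} these adjacencies glue $\Lambda^i_1,\dots,\Lambda^i_4$ along $2$-faces. I would check, case by case on the type of each $t^i_j$, that the identified faces and the matched $*$-vertices coincide exactly with the assembly of the octahedron $o_i$ in Step 2, realizing $n^i_j\mapsto n^i$, $s^i_j\mapsto s^i$, $\tilde e^i_j\mapsto e^i_{j-1,j}$ and $\tilde e'^i_j\mapsto e^i_{j,j+1}$ with indices modulo $4$. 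The \emph{external} gluings arise from arcs of $\zeta(D)$ joining neighboring crossings $c_i$ and $c_{i'}$, and I would verify that the $*$-marked gluing of the corresponding outer faces agrees with the gluing of the octahedra $o_i$ and $o_{i'}$ along adjacent leaves of Step 3 (Figures \ref{fig:attach} and \ref{fig:leafs}).

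Finally, the remaining operations of Step 3 --- pulling the equatorial vertices $e^i_{j,j+1}$ to $\pm\infty$, identifying them with $+\infty$ or $-\infty$, and gluing the resulting pairs of edges through $n^i$ and $s^i$ --- are exactly edge and vertex identifications of the kind allowed in the definition of a colored singular triangulation in Section \ref{CD}. Hence $\mathcal{O}(D)$ is obtained from $\mathcal{C}(\zeta(D))$ by identifying a set of edges and vertices, and since $\mathcal{O}(D)$ is an ideal triangulation of $E$, it is a colored ideal triangulation of type $\zeta(D)$. I expect the main obstacle to be the case analysis of the previous paragraph: keeping the face orderings $f_1,\dots,f_4$, the in/out orientation convention of Section \ref{coi}, and the $*$-marks of Figure \ref{fig:colorpaste} mutually consistent across all eight tetrahedron types and both the positive and negative crossing cases, so that the purely combinatorial $\mathcal{C}$-gluing is seen to reproduce the geometric octahedral assembly.
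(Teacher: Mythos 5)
Your proposal is correct and follows essentially the same route as the paper's proof: identify each octahedron $o_i$ with the colored cell complex $\mathcal{C}(\zeta(c_i))$, check that the Step 3 gluings of octahedra obey the gluing rule of Section \ref{codiag} so that the assembled complex is $\mathcal{C}(\zeta(D))$, and observe that the final identifications of Figure \ref{fig:attach} are exactly edge and vertex identifications permitted by the definition in Section \ref{CD}. The only difference is presentational: the paper carries out your case-by-case matching of tetrahedra, $*$-marks, and face-gluings by exhibiting it in Figures \ref{colorcrossing} and \ref{connection} rather than by an explicit enumeration.
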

\begin{proof}
Recall that in Step 2 of the definition of $\mathcal{O}(D)$, we associate an octahedron $o_i$ to each crossing $c_i$, where the octahedron is obtained from four tetrahedra as in Figure \ref{fig:color}.  
Actually we can obtain $o_i$ also as the colored cell complex $\mathcal{C}(\zeta(c_i))$ as depicted in Figure \ref{colorcrossing}.
In Step 3, we glued the octahedra and triangles as in Figure \ref{connection},
which follows the gluing rule of the colored tetrahedra and triangles defined in Section \ref{codiag}. 
As the result we have $\mathcal{C}(\zeta(D))$, and finally we identify 
the edges of each octahedron as in Figure \ref{fig:attach},
which gives $\mathcal{O}(D)$ which is singular triangulation of type $\zeta(D)$.
This completes the proof.

\begin{figure}
\includegraphics[width=10cm,clip]{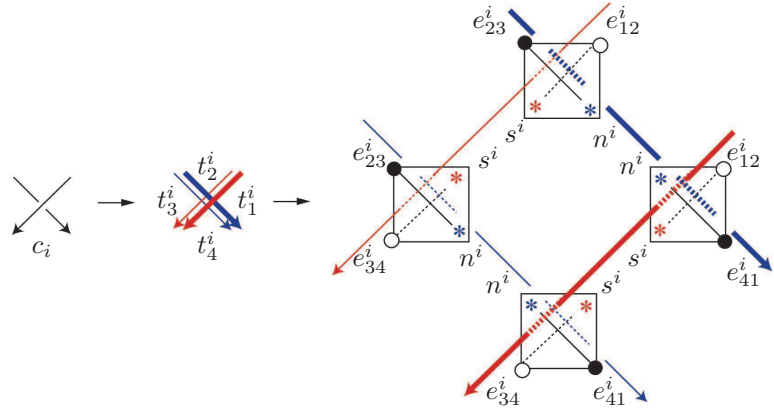}
\caption{The colored ideal triangulation and the octahedron at a crossing of a tangle}\label{colorcrossing}

\begin{picture}(0,0)
\put(-135,100){$c_i$}
\put(-58,117){$t^i_1$}
\put(-73,130){$t^i_2$}
\put(-88,117){$t^i_3$}
\put(-73,100){$t^i_4$}

\put(30,185){$e^{i}_{23}$}
\put(-15,135){$e^{i}_{23}$}
\put(-15,95){$e^{i}_{34}$}
\put(35,45){$e^{i}_{34}$}
\put(80,185){$e^{i}_{12}$}
\put(125,135){$e^{i}_{12}$}
\put(125,90){$e^{i}_{41}$}
\put(75,45){$e^{i}_{41}$}
\put(25,95){$n^i$}
\put(35,85){$n^i$}
\put(75,140){$n^i$}
\put(85,130){$n^i$}
\put(32,132){$s^i$}
\put(42,142){$s^i$}
\put(78,85){$s^i$}
\put(88,95){$s^i$}
\end{picture}

\end{figure}

\begin{figure}
\includegraphics[width=13cm,clip]{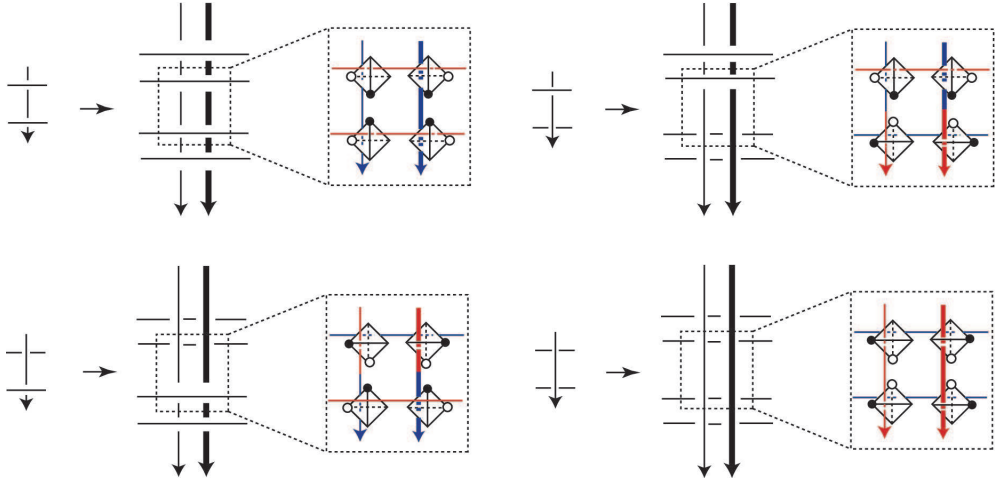}
\caption{How we glued the octahedra in the octahedral triangulations, where the black dots are attached to $+\infty$ and the white dots are attached to $-\infty$.}\label{connection}
\end{figure}

\end{proof}

 
\begin{remark}\label{geom}
A tangle complement could admit more than one colored ideal triangulations up to the equivalence relation $\sim'_{ct}$, and the universal quantum invariant $J'$ could give different values on them. 
We expect that \textit{the universal quantum invariant is an invariant of pairs of $3$-manifolds and some geometrical inputs obtained from the color}, which we  will study in \cite{KST}.
\end{remark}


\end{document}